\pgfplotsset{compat = newest}
\providecommand{\tria}{\mathcal{T}}
\providecommand{\dx}{\, \mathrm{d}x}
\begin{document}

\author[J.~Storn]{Johannes Storn}
\address[J.~Storn]{Department of Mathematics, University of Bielefeld, Postfach 10 01 31, 33501 Bielefeld, Germany}
\email{jstorn@math.uni-bielefeld.de}
\thanks{%
\textit{Funding.} The work of the author was supported by the Deutsche Forschungsgemeinschaft (DFG, German Research Foundation) -- SFB 1283/2 2021 -- 317210226.\\ \indent
\textit{Data Availability.}
The datasets generated during and/or analysed during the current study are available from the corresponding author on reasonable request.}

\subjclass[2020]{65N12, 65N15, 65N20, 49M29, 49M20}
\keywords{minimization in $W^{-1,p'}$, minimal residual method, convection-dominated diffusion, relaxed \Kacanov{} iteration}

\title[Minimal Residual methods in negative norms]{Solving Minimal Residual Methods in $W^{-1,p'}$ with large Exponents $p$} 
\begin{abstract}
We introduce a numerical scheme that approximates solutions to linear PDE's by minimizing a residual in the $W^{-1,p'}(\Omega)$ norm with exponents $p> 2$. 
The resulting problem is solved by regularized \Kacanov{} iterations, allowing to compute the solution to the non-linear minimization problem even for large exponents $p\gg 2$. Such large exponents remedy instabilities of finite element methods for problems like convection-dominated diffusion.
\end{abstract}
\maketitle

\section{Introduction}
Starting with the work of Guermond \cite{Guermond04}, recent papers like \cite{HoustonMugaRoggendorfvanderZee19,MugaZee20,HoustonRoggendorfVanDerZee22,LiDemkowicz22,MillarMugaRojasVanDerZee22} have approximated linear PDE's by minimal residual methods in Banach spaces. The reasons for using Banach spaces like $L^p(\Omega)$ or $W^{1,p}_0(\Omega)$ rather than Hilbert spaces like $L^2(\Omega)$ or $W^{1,2}_0(\Omega)$ are manyfold. For example, rough data might lead to solutions that are not in $L^2(\Omega)$, see \cite{HoustonMugaRoggendorfvanderZee19,MillarMugaRojasVanDerZee22}.
Furthermore, minimizing in $L^1(\Omega)$ seems to allow for computations of viscosity solutions, see \cite[Sec.~4.6]{Guermond04}. Moreover, classical finite element methods lead for problems like singular perturbed problems or convection-dominated diffusion to non-physical oscillations that can be overcome by the use of minimal residual methods in spaces like $L^1(\Omega)$, see for example \cite{HoustonRoggendorfVanDerZee22}.
Unfortunately, the resulting numerical schemes are non-linear minimization problems which are difficult to solve.
We overcome this downside for a minimal residual method in $W^{-1,p'}(\Omega)$ with $p>2$ by introducing a regularized \Kacanov{} scheme that converges even for large exponents $p\gg 2$ towards the exact discrete minimizer.
More precisely, we do the following.

Suppose we have some linear operator $B\colon W^{1,p'}_0(\Omega) \to W^{-1,p'}(\Omega)$ that maps the Sobolev space $W^{1,p}_0(\Omega)$ equipped with homogeneous Dirichlet boundary data onto the dual space $W^{-1,p'}(\Omega) \coloneqq (W^{1,p}_0(\Omega))^*$ with $1/p + 1/p' = 1$. Given a right-hand side $F\in W^{-1,p'}(\Omega)$ and discrete subspaces $U_h\subset  W^{1,p'}_0(\Omega)$ and $V_h\subset W^{1,p}_0(\Omega)$, we approximate the solution $\fru \in W^{1,p'}_0(\Omega)$ to $B\fru = F$ by a minimizer 
\begin{align}\label{eq:discreteMinProb}
\fru_h \in \argmin_{u_h \in U_h} \, \lVert Bu_h - F \rVert_{V^*_h}.
\end{align}
For $p=2$ the computation of the minimizer in \eqref{eq:discreteMinProb} has been discussed in \cite[Sec.~3.2]{MonsuurStevensonStorn23}. For $p>2$ we modify the saddle point problem therein by introducing a computable weight $\sigma_n^\zeta$ with values within some relaxation interval $\zeta = [\zeta_-,\zeta_+]\subset (0,\infty)$ in the sense that $\sigma_n^\zeta(x) \in \zeta$ for almost all $x\in \Omega$. The resulting scheme seeks $\psi_{h,{n+1}}\in V_h$ and $\fru_{h,n+1} \in U_h$ with
\begin{align}\label{eq:Intro}
\begin{aligned}
\int_\Omega (\sigma_n^\zeta)^{2-p'} \nabla \psi_{h,n+1} \cdot \nabla v_h \dx + B\fru_{h,n+1}(v_h) &= F(v_h)&&\text{for all }v_h\in V_h,\\
Bu_h(\psi_{h,n+1}) & = 0&&\text{for all }u_h\in U_h.
\end{aligned}
\end{align}
Solving this problem allows us to update the weight and to proceed inductively.

To verify the convergence of the iterative scheme, we introduce in Section~\ref{sec:PrimalAndDual} equivalent formulations of the problem in \eqref{eq:discreteMinProb} using duality. 
Since the resulting problems share similarities with the $p$-Laplace problem, we can exploit recent ideas for the $p$-Laplace operator from \cite{DieningFornasierTomasiWank20,BalciDieningStorn22}. In particular, we introduce a regularization of the dual problem via a relaxation interval $\zeta = [\zeta_-,\zeta_+]$ and show convergence of the minimizers of the regularized problem towards the exact minimizer as $\zeta_- \to 0$ and $\zeta_+ \to \infty$ in Section~\ref{sec:regularization}. 
We verify the convergence of the \Kacanov{} iterations towards the minimizes of the regularized dual problems in Section~\ref{sec:KacanovScheme}. Additionally, we use duality again to rewrite the \Kacanov{} iterations as a primal problem, leading to the scheme in~\eqref{eq:Intro}. We conclude our analysis with a study of a priori and a posteriori error estimates in Section~\ref{sec:ErrorControl} and suggest an adaptive scheme in Section~\ref{sec:AdaptiveScheme}. Finally, we study numerically the beneficial properties of the scheme and discuss strategies to solve challenging problems like convection-dominated diffusion with vanishing viscosity in Section~\ref{sec:Applications}.
\section{Primal and Dual Formulation}\label{sec:PrimalAndDual}
Before we discuss the problem in \eqref{eq:discreteMinProb}, let us introduce some notation:
\begin{itemize}
\item The operator $B \colon W^{1,p'}_0(\Omega) \to W^{-1,p'}(\Omega)$ with exponent $p>2$ is a bounded linear mapping, defining a bilinear form $b(u,v) \coloneqq Bu(v)$ for all $u\in W^{1,p'}_0(\Omega)$ and $v \in W^{1,p}_0(\Omega)$. Moreover, let $F\in W^{-1,p'}(\Omega)$ be some given data.
\item Given a regular triangulation $\tria$ of the bounded Lipschitz domain $\Omega \subset \mathbb{R}^d$, set for all $\ell \in \mathbb{N}_0$ the space of piece-wise polynomials $\mathcal{L}^0_\ell(\tria) \coloneqq \lbrace w \in L^2(\Omega) \colon w|_T$ is a polynomial of maximal degree $\ell$ for all $T\in \tria\rbrace$ and set for some fixed degrees $k,\delta \in \mathbb{N}$ the Lagrange finite element spaces
\begin{align*}
U_h&\coloneqq \mathcal{L}^1_{k,0}(\tria) \coloneqq \mathcal{L}^0_{k}(\tria) \cap  W^{1,p'}_0(\Omega),\\
V_h&\coloneqq \mathcal{L}^1_{k+\delta,0}(\tria) \coloneqq \mathcal{L}^0_{k+\delta}(\tria) \cap  W^{1,p}_0(\Omega).
\end{align*}
\item Set for all $G\in W^{-1,p'}(\Omega)$ the discrete dual seminorm 
\begin{align*}
\lVert G \rVert_{V^*_h} \coloneqq \sup_{v_h\in V_h\setminus\lbrace 0 \rbrace} \frac{G(v_h)}{\lVert \nabla v_h \rVert_{L^p(\Omega)}}.
\end{align*}
\item Set the subspace $(BU_h)^\perp \coloneqq \lbrace v_h \in V_h\colon b(u_h ,v_h) = 0$ for all $u_h \in U_h\rbrace \subset V_h$.
\end{itemize}
We can characterize the solution $\fru_h \in U_h$ to \eqref{eq:discreteMinProb} via the saddle point problem: Seek $\psi_h\in V_h$ and $\fru_h \in U_h$ such that
\begin{align}\label{eq:Saddle1}
\begin{aligned}
\int_\Omega |\nabla \psi_h|^{p-2} \nabla \psi_h \cdot \nabla v_h \dx + b(\fru_h,v_h) &= F(v_h)&&\text{for all }v_h \in V_h,\\
b(u_h,\psi_h) &= 0 &&\text{for all }u_h \in U_h.
\end{aligned}
\end{align}
A further related problems seeks the minimizer
\begin{align}\label{eq:discreteMinProb2}
\psi_h = \argmin_{v_h \in (BU_h)^\perp} \frac{1}{p} \int_\Omega  |\nabla v_h |^p \dx - F(v_h).
\end{align}
\begin{lemma}[Existence and equivalent characterization]\label{lem:exAndEqui}
\ 
\begin{enumerate}
\item There exists a unique solution $\psi_h \in (BU_h)^\perp$ to the minimization problem in \eqref{eq:discreteMinProb2}.\label{itm:exPsiH}
\item There exists a solution $\fru_h \in U_h$ to the minimization problem in \eqref{eq:discreteMinProb}. The solution $\fru_h$ is unique up to the kernel $\ker B|_{U_h} \coloneqq \lbrace u_h \in U_h\colon Bu_h = 0$ in $V_h^*\rbrace$.\label{itm:exUh}
\item The pair $(\psi_h,\fru_h) \in V_h \times U_h$ solves \eqref{eq:Saddle1} if and only if $\fru_h \in U_h$ solves \eqref{eq:discreteMinProb} and $\psi_h \in (BU_h)^\perp$ solves \eqref{eq:discreteMinProb2}.\label{itm:equi}
\end{enumerate}
\end{lemma}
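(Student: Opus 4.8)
The plan is to treat \eqref{eq:discreteMinProb2} as the primal object, to read the saddle point system \eqref{eq:Saddle1} as its Lagrangian stationarity conditions, and to recover \eqref{eq:discreteMinProb} a posteriori from the dual-norm representation of the residual. First I would settle item~\eqref{itm:exPsiH} by the direct method on the finite-dimensional space $V_h$. Since the homogeneous boundary conditions and a Poincar\'e inequality make $v_h \mapsto \lVert \nabla v_h \rVert_{L^p(\Omega)}$ a norm on $V_h$ (so $\nabla v_h = 0$ forces $v_h = 0$), the energy $J(v_h) \coloneqq \frac1p \lVert \nabla v_h \rVert_{L^p(\Omega)}^p - F(v_h)$ is continuous, strictly convex (the integrand $t \mapsto \lvert t\rvert^p$ is strictly convex for $p>1$ and $v_h \mapsto \nabla v_h$ is injective on $V_h$), and coercive on the closed subspace $(BU_h)^\perp$, because $J(v_h) \ge \frac1p \lVert \nabla v_h \rVert_{L^p(\Omega)}^p - \lVert F \rVert_{V^*_h} \lVert \nabla v_h \rVert_{L^p(\Omega)}$ with $p>1$. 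Existence and uniqueness of the minimizer $\psi_h$ follow at once.

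Next I would derive the first line of \eqref{eq:Saddle1}. The G\^ateaux derivative of $J$ gives the optimality condition $\int_\Omega \lvert \nabla \psi_h\rvert^{p-2} \nabla \psi_h \cdot \nabla v_h \dx = F(v_h)$ for all $v_h \in (BU_h)^\perp$; equivalently, the functional $\ell \coloneqq F - \int_\Omega \lvert \nabla \psi_h\rvert^{p-2}\nabla \psi_h \cdot \nabla(\cdot)\dx \in V^*_h$ annihilates $(BU_h)^\perp$. Writing $(BU_h)^\perp = \ker T$ for the map $T\colon V_h \to U_h^*$, $(Tv_h)(u_h) \coloneqq b(u_h,v_h)$, whose adjoint satisfies $T^* u_h = Bu_h|_{V_h}$, the finite-dimensional annihilator identity $(\ker T)^\circ = \operatorname{range}(T^*)$ shows $\ell \in B(U_h)$. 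Hence there exists $\fru_h \in U_h$, unique modulo $\ker B|_{U_h}$, with $\ell = B\fru_h$ on $V_h$, which is exactly the first equation of \eqref{eq:Saddle1}, while $\psi_h \in (BU_h)^\perp$ is the second. Since convexity of $J$ makes these stationarity conditions sufficient, solvability of \eqref{eq:Saddle1} is equivalent to $\psi_h$ solving \eqref{eq:discreteMinProb2}, which is one half of \eqref{itm:equi}.

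It remains to identify the multiplier $\fru_h$ as the residual minimizer. Rearranging the first equation yields the key representation $(F - B\fru_h)(v_h) = \int_\Omega \lvert \nabla \psi_h\rvert^{p-2}\nabla \psi_h \cdot \nabla v_h \dx$ for all $v_h \in V_h$, whence $\lVert F - B\fru_h \rVert_{V^*_h} = \lVert \nabla \psi_h \rVert_{L^p(\Omega)}^{p-1}$ by H\"older's inequality with the supremum attained at $v_h = \psi_h$. For the matching lower bound I would test the dual norm of an arbitrary residual against $\psi_h$: because $\psi_h \in (BU_h)^\perp$ and $F(\psi_h) = \lVert \nabla \psi_h \rVert_{L^p(\Omega)}^p$ (the Euler--Lagrange identity at $v_h = \psi_h$), one finds $(F - Bu_h)(\psi_h) = F(\psi_h) = \lVert \nabla \psi_h \rVert_{L^p(\Omega)}^p$ for every $u_h \in U_h$, so $\lVert F - Bu_h \rVert_{V^*_h} \ge \lVert \nabla \psi_h \rVert_{L^p(\Omega)}^{p-1}$ with equality at $u_h = \fru_h$. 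Thus $\fru_h$ solves \eqref{eq:discreteMinProb}, which proves existence in \eqref{itm:exUh} together with the remaining implication of \eqref{itm:equi}.

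Finally, for the uniqueness assertion in \eqref{itm:exUh} I would exploit the strict convexity of the norm $\lVert \nabla \cdot \rVert_{L^p(\Omega)}$ on $V_h$ inherited from the strict convexity of $L^p$ for $1<p<\infty$: a nonzero functional attains its dual norm along a unique direction. Consequently any residual minimizer $F - B\tilde{\fru}_h$, whose norm is realized at $\psi_h$, must satisfy $(F - B\tilde{\fru}_h)(v_h) = \mu \int_\Omega \lvert \nabla \psi_h\rvert^{p-2}\nabla \psi_h \cdot \nabla v_h \dx$, and testing with $v_h = \psi_h$ forces $\mu = 1$, so $B\tilde{\fru}_h = B\fru_h$ and $\tilde{\fru}_h - \fru_h \in \ker B|_{U_h}$. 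The hard part will be exactly this last identification --- passing from equality of residual \emph{norms} to equality of the residuals themselves --- since it is here that the strict convexity of the $L^p$ geometry, rather than mere convexity, is indispensable; the construction of the multiplier through the annihilator identity is the other delicate, though standard, point.
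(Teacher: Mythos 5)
Your overall architecture is genuinely different from the paper's, and its core is sound. Where the paper obtains $\fru_h$ by the direct method applied to the closed subspace $\lbrace Bu_h\colon u_h\in U_h\rbrace\subset V_h^*$ and then proves the equivalence with the saddle point system via a Hahn--Banach norming functional combined with the duality mapping of Lemma~\ref{lem:dualityMapping}, you construct the multiplier directly from $\psi_h$ through the finite-dimensional annihilator identity $(\ker T)^\circ=\operatorname{range}(T^*)$ (the paper uses this range argument only in its last step), and you then identify the multiplier as a residual minimizer by the weak-duality computation $(F-Bu_h)(\psi_h)=F(\psi_h)=\lVert\nabla\psi_h\rVert_{L^p(\Omega)}^p$ for every $u_h\in U_h$. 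This testing argument is clean and buys you the existence part of \ref{itm:exUh} and the implication ``minimizers $\Rightarrow$ saddle point solvable'' without any Hahn--Banach machinery.

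The gap sits exactly where you predicted it: in passing from ``$F-B\tilde{\fru}_h$ attains its norm at $\psi_h$'' to ``$F-B\tilde{\fru}_h$ is a positive multiple of $\int_\Omega|\nabla\psi_h|^{p-2}\nabla\psi_h\cdot\nabla(\cdot)\dx$''. You justify this by strict convexity of $\lVert\nabla\cdot\rVert_{L^p(\Omega)}$ on $V_h$, but strict convexity delivers the wrong half of the duality: it says a fixed nonzero functional attains its norm at a unique unit vector, not that a unit vector determines, up to a positive scalar, the functionals normed at it. The latter is equivalent to smoothness of the norm (strict convexity of the dual norm), and the two properties genuinely differ: at a corner of a lens-shaped --- strictly convex but non-smooth --- unit ball, many mutually non-proportional functionals attain their norms. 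Two fixes are available in your setting. Either invoke smoothness directly: $v_h\mapsto\nabla v_h$ embeds $V_h$ isometrically into $L^p(\Omega;\mathbb{R}^d)$, which is smooth for $1<p<\infty$, so $V_h^*$ is strictly convex and functionals sharing a norming point are positively proportional. Or argue as the paper does: by Lemma~\ref{lem:dualityMapping}~\ref{itm:sada} (whose existence statement your proposal never establishes) represent $F-B\tilde{\fru}_h$ as the $p$-Laplacian functional of its potential $R(F-B\tilde{\fru}_h)$; uniqueness of the norming direction then forces $R(F-B\tilde{\fru}_h)=c\,\psi_h$ with $c>0$, hence $F-B\tilde{\fru}_h=c^{p-1}\int_\Omega|\nabla\psi_h|^{p-2}\nabla\psi_h\cdot\nabla(\cdot)\dx$, and testing with $\psi_h$ gives $c=1$. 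Note also that this norming argument presupposes $F-B\tilde{\fru}_h\neq 0$, so the degenerate case $\psi_h=0$ (equivalently $F\in BU_h$) needs the separate, trivial treatment the paper gives it in its Step~2.
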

This lemma is shown in \cite[Thm.~4.1]{MugaZee20} within an abstract framework.
We give a direct proof utilizing the following statement.
\begin{lemma}[Duality mapping]\label{lem:dualityMapping}
Let $G\in V_h^*$.
\begin{enumerate}
\item There exists a unique solution $R(G) \in V_h$ to the problem\label{itm:sada}
\begin{align*}
\int_\Omega |\nabla R(G)|^{p-2} \nabla R(G) \cdot \nabla v_h \dx = G(v_h)\qquad\text{for all }v_h\in V_h.
\end{align*}
\item If $G \neq 0$, the function $\lVert \nabla R(G) \rVert_{L^p(\Omega)}^{-1} R(G)$ is the unique normed function that attains the supremum in the definition of the $V_h^*$ norm of $G$ in the sense that any $\Theta_h \in V_h \text{ with }\lVert \nabla \Theta_h \rVert_{L^p(\Omega)} = 1$ satisfies
\begin{align*}
G(\Theta_h) = \lVert G \rVert_{V_h^*}\quad\text{if and only if}\quad \Theta_h = \lVert \nabla R(G) \rVert_{L^p(\Omega)}^{-1} R(G).
\end{align*} 
\end{enumerate}
\end{lemma}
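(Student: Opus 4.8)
The plan is to treat part~\ref{itm:sada} by the direct method of the calculus of variations and part~(2) by a direct computation combined with the strict convexity (rotundity) of the $L^p$ norm. Throughout I use that $v_h \mapsto \lVert \nabla v_h \rVert_{L^p(\Omega)}$ is a genuine norm on the finite-dimensional space $V_h$: since $V_h \subset W^{1,p}_0(\Omega)$, a vanishing gradient forces the function to vanish by Poincaré's inequality, so the linear gradient map $v_h \mapsto \nabla v_h$ is injective from $V_h$ into $L^p(\Omega;\mathbb{R}^d)$.

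For part~\ref{itm:sada} I would consider the energy
\begin{align*}
J(v_h) \coloneqq \frac{1}{p}\int_\Omega |\nabla v_h|^p \dx - G(v_h),\qquad v_h \in V_h.
\end{align*}
The integrand $\xi \mapsto |\xi|^p/p$ is strictly convex for $p>1$, and composing with the injective gradient map shows that $J$ is strictly convex on $V_h$; it is also continuous and, because $J(v_h) \geq p^{-1}\lVert \nabla v_h \rVert_{L^p(\Omega)}^p - \lVert G \rVert_{V_h^*}\lVert \nabla v_h \rVert_{L^p(\Omega)}$, coercive. On the finite-dimensional space $V_h$ the Weierstrass theorem then yields a minimizer, which is unique by strict convexity. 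Computing the first variation of $J$ shows that a function is the minimizer if and only if it solves the stated nonlinear equation, giving existence and uniqueness of $R(G)$.

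For part~(2), first note that testing the equation with $v_h = R(G)$ gives $\lVert \nabla R(G) \rVert_{L^p(\Omega)}^p = G(R(G))$, so $R(G) \neq 0$ whenever $G \neq 0$ and $\Theta_h$ is well defined with $\lVert \nabla \Theta_h \rVert_{L^p(\Omega)} = 1$ and $G(\Theta_h) = \lVert \nabla R(G) \rVert_{L^p(\Omega)}^{p-1}$. The key estimate is Hölder's inequality: using $(p-1)p' = p$ one obtains for every $v_h \in V_h$ that
\begin{align*}
G(v_h) = \int_\Omega |\nabla R(G)|^{p-2}\nabla R(G)\cdot \nabla v_h \dx \leq \lVert \nabla R(G) \rVert_{L^p(\Omega)}^{p-1}\,\lVert \nabla v_h \rVert_{L^p(\Omega)},
\end{align*}
so that $\lVert G \rVert_{V_h^*} \leq \lVert \nabla R(G) \rVert_{L^p(\Omega)}^{p-1}$; together with the value of $G(\Theta_h)$ this shows that $\Theta_h$ attains the supremum and that $\lVert G \rVert_{V_h^*} = \lVert \nabla R(G) \rVert_{L^p(\Omega)}^{p-1}$.

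It remains to prove the ``if and only if'', that is, that $\Theta_h$ is the \emph{only} normed maximizer; this is the step I expect to require the most care. One route traces the equality cases above: equality forces the pointwise Cauchy--Schwarz and the Hölder inequality to be tight simultaneously, whence $\nabla \Theta_h' = \lambda \nabla R(G)$ almost everywhere for some $\lambda \geq 0$, and the normalization together with injectivity of the gradient pins down $\lambda$ and gives $\Theta_h' = \Theta_h$. A cleaner alternative is to observe that $(V_h, \lVert \nabla \cdot \rVert_{L^p(\Omega)})$ is isometric to a subspace of the strictly convex space $L^p(\Omega;\mathbb{R}^d)$ and is hence itself strictly convex, so a nonzero functional attains its norm on at most one point of the unit sphere; combined with the explicit maximizer $\Theta_h$ this yields the claim.
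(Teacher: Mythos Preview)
Your proposal is correct and follows essentially the same approach as the paper: part~\ref{itm:sada} via the direct method on the energy $J$, and part~(2) via H\"older's inequality together with strict convexity of $L^p$. The paper's uniqueness argument is precisely your ``cleaner alternative'' carried out explicitly: given a second normed maximizer $\Theta_h'$, linearity makes the midpoint $\tfrac12(\Theta_h+\Theta_h')$ attain $\lVert G\rVert_{V_h^*}$ as well, forcing $\lVert \tfrac12(\nabla\Theta_h+\nabla\Theta_h')\rVert_{L^p(\Omega)}\geq 1$, while the triangle inequality gives $\leq 1$, so strict convexity of $L^p$ (citing Hanner) yields $\Theta_h'=\Theta_h$.
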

\begin{proof}
Let $G\in V_h^*$. 
The direct method in calculus of variations yields the existence of unique minimizers $R(G) \in V_h$ with
\begin{align*}
R(G) = \argmin_{v_h\in V_h} \frac{1}{p} \int_\Omega |\nabla v_h|^p \dx - G(v_h).
\end{align*}
Differentiation shows that this existence result is equivalent to the statement in \ref{itm:sada}.

Let $G\neq 0$.  H\"older's inequality and testing with $v_h = R(G)$ shows that 
\begin{align*}
\lVert G \rVert_{V_h^*} = \sup_{v_h\in V_h\setminus \lbrace 0 \rbrace} \frac{\int_\Omega |\nabla R(G)|^{p-2} \nabla R(G) \cdot \nabla v_h\dx}{\lVert \nabla v_h \rVert_{L^p(\Omega)}} = \lVert \nabla R(G)\rVert_{L^p(\Omega)}^{p-1}.
\end{align*}
This yields $G(\lVert  \nabla R(G)\rVert_{L^p(\Omega)}^{-1} R(G)) = \lVert G \rVert_{V_h^*}$.
Let $\Theta_h\in V_h$ with $\lVert \nabla \Theta_h \rVert_{L^p(\Omega)} = 1$ be a further function that attains the supremum in the sense that 
$
G(\Theta_h) = \lVert G \rVert_{V_h^*}. 
$
The linearity of $G$ implies 
\begin{align*}
 \sup_{v_h\in V_h\setminus \lbrace 0 \rbrace} \frac{G(v_h)}{\lVert \nabla v_h \rVert_{L^p(\Omega)}} = \lVert G \rVert_{V_h^*} = G\left( \tfrac{1}{2}\lVert \nabla R(G)\rVert_{L^p(\Omega)}^{-1} R(G) + \tfrac{1}{2} \Theta_h\right),
\end{align*}
which yields in particular that $1 \leq \big\lVert  \tfrac{1}{2}\lVert \nabla R(G)\rVert_{L^p(\Omega)}^{-1} \nabla R(G) + \tfrac{1}{2}\nabla \Theta_h\big\rVert_{L^p(\Omega)}$.
This estimate and the triangle inequality shows that 
\begin{align*}
2 &\leq \left\lVert \lVert \nabla R(G)\rVert_{L^p(\Omega)}^{-1}\nabla  R(G) + \nabla \Theta_h \right\rVert_{L^p(\Omega)}\\
& \leq \left\lVert \lVert \nabla R(G)\rVert_{L^p(\Omega)}^{-1} \nabla R(G) \right\rVert_{L^p(\Omega)} +\lVert\nabla \Theta_h\rVert_{L^p(\Omega)} = 2.
\end{align*} 
Since $W^{1,p}_0(\Omega)$ is a strictly convex space \cite{Hanner56}, this identity yields 
\begin{align*}
\Theta_h &= \lVert \nabla R(G)\rVert_{L^p(\Omega)}^{-1} R(G).\qedhere
\end{align*}
\end{proof}
\begin{proof}[Proof of Lemma~\ref{lem:exAndEqui}]
\textit{Step 1 (Proof of \ref{itm:exPsiH} and \ref{itm:exUh}).}
The direct method in calculus of variations yields the existence of unique minimizers $\psi_h \in (BU_h)^\perp$ of the strictly convex energy in \eqref{eq:discreteMinProb2}, that is, it verifies~\ref{itm:exPsiH}.
Similarly, we conclude the existence of a unique minimizer
\begin{align*}
B\fru_h = \argmin_{\lbrace Bu_h \colon u_h \in U_h\rbrace} \lVert Bu_h - F\rVert_{V_h^*}. 
\end{align*}
This yields the existence of a minimizer $\fru_h \in U_h$ to the problem \eqref{eq:discreteMinProb} and shows \ref{itm:exUh}. 

\textit{Step 2 (Proof of \ref{itm:equi}, trivial case).}
Let $\fru_h\in U_h$ and $\psi_h\in (BU_h)^\perp$ satisfy  \eqref{eq:discreteMinProb} and \eqref{eq:discreteMinProb2}. 
If $B\fru_h = F$ in   $V_h^*$, the problem in \eqref{eq:Saddle1} is satisfied with $\psi_h = 0$ and vice versa.

\textit{Step 3 (Proof of ``$\Leftarrow$'' in \ref{itm:equi}).}
Let $\fru_h\in U_h$ satisfy \eqref{eq:discreteMinProb} and let $\psi_h\in (BU_h)^\perp$ satisfy \eqref{eq:discreteMinProb2} with $B\fru_h \neq F$ in $V_h^*$. Since $\lbrace Bu_h\colon u_h \in U_h\rbrace$ is a closed subspace of $V_h^*$, a consequence of the Hahn-Banach theorem (see for example \cite[Prop.~3]{Zeidler95}) yields the existence of a function $\Theta_h \in V_h$ with $\lVert \nabla \Theta_h \rVert_{L^p(\Omega)} = 1$, 
\begin{align}\label{eq:ProofTeatsfsa}
(F - B \fru_h)(\Theta_h) = \lVert B \fru_h - F \rVert_{V_h^*},\quad\text{and}\quad B u_h (\Theta_h)  = 0\quad\text{for all }u_h \in U_h.
\end{align}
Lemma~\ref{lem:dualityMapping} characterizes the function $\Theta_h \in V_h$ due to the first identity in \eqref{eq:ProofTeatsfsa} as $\Theta_h = \lVert \phi_h \rVert_{L^p(\Omega)}^{-1} \phi_h$, where $\phi_h \in V_h$ solves the problem
\begin{align}\label{eq:asfvsca}
\int_\Omega |\nabla \phi_h|^{p-2} \nabla \phi_h \cdot \nabla v_h \dx = (F  -B\fru_h)(v_h) \qquad\text{for all }v_h \in V_h.
\end{align} 
In particular, the function $\phi_h$ solves 
\begin{align*}
\int_\Omega |\nabla \phi_h|^{p-2} \nabla \phi_h \cdot \nabla v_h \dx = F( v_h)\qquad\text{for all }v_h \in (BU_h)^\perp.
\end{align*}
Since this characterizes the minimizer in \eqref{eq:discreteMinProb2} and 
$\phi_h \in (BU_h)^\perp$ due to the second identity in \eqref{eq:ProofTeatsfsa}, we have $\phi_h = \psi_h$. Hence, the functions $(\fru_h,\psi_h) \in U_h\times V_h$ solve~\eqref{eq:Saddle1}.

\textit{Step 4 (Proof of ``$\Rightarrow$'' in \ref{itm:equi}).}
If there exists a solution $(\fru_h,\psi_h) \in U_h\times V_h$ to \eqref{eq:Saddle1}, the function $\psi_h\in V_h$ is an element in $(BU_h)^\perp$ and satisfies in particular
\begin{align}\label{eq:safgsadg}
\int_\Omega |\nabla \psi_h|^{p-2} \nabla \psi_h \cdot \nabla v_h \dx = F (v_h)\qquad\text{for all }v_h \in (BU_h)^\perp.
\end{align}
This identity characterizes the unique (Step 1) solution to \eqref{eq:discreteMinProb2}, that is, $\psi_h$ must be the minimizer in \eqref{eq:discreteMinProb2}.
The solution $\fru_h\in U_h$ to \eqref{eq:Saddle1} is characterized via the identity
\begin{align}\label{eq:zzzzzzzzzzzsda}
b(\fru_h,v_h) = F(v_h) - \int_\Omega |\nabla \psi_h|^{p-2} \nabla \psi_h\cdot v_h\dx \qquad\text{for all }v_h \in V_h.
\end{align}
Since the right-hand side equals zero for all $v_h \in (BU_h)^\perp$ due to~\eqref{eq:safgsadg}, it is in the range of the operator $B\colon U_h\to V_h^*$, that is, there exist a unique solution $\fru_h \in U_h/\ker B|_{U_h}$ to \eqref{eq:zzzzzzzzzzzsda}. 
We know from Step 3 that the solution to \eqref{eq:discreteMinProb} solves the problem in \eqref{eq:zzzzzzzzzzzsda} as well. The uniqueness of these solutions up to the kernel $\ker B|_{U_h}$ (Step 1) implies that they must coincide.
\end{proof}
The minimization problem in \eqref{eq:discreteMinProb2} shares similarities with the $p$-Laplace problem, which can be solved by the regularized \Kacanov{} scheme introduced in \cite{DieningFornasierTomasiWank20}. Unfortunately, this schemes converges only for $p \leq 2$. We remedy this downside as in \cite{BalciDieningStorn22} by the use of duality. The dual problem of \eqref{eq:discreteMinProb2} involves the affine space 
\begin{align}\label{eq:DefSigma}
\Sigma \coloneqq \left\lbrace \tau \in L^{p'}(\Omega;\mathbb{R}^d) \colon \int_\Omega \nabla v_h \cdot \tau\dx  = F(v_h) \text{ for all } v_h \in (BU_h)^\perp \right\rbrace.
\end{align}
It seeks the minimizer to the problem 
\begin{align}\label{eq:dualProb}
\sigma = \argmin_{\tau \in \Sigma} \frac{1}{p'} \int_\Omega |\tau|^{p'}\dx. 
\end{align}
Let us show the equivalence of the problems in \eqref{eq:discreteMinProb2} and \eqref{eq:dualProb}. The solution to \eqref{eq:discreteMinProb2} is characterized via the Euler-Lagrange equation as unique solution $\psi_h \in (BU_h)^\perp$ to
\begin{align}\label{eq:EulerLagrange}
\int_\Omega |\nabla \psi_h|^{p-2} \nabla\psi_h \cdot \nabla v_h \dx = F(v_h)\qquad\text{for all } v_h\in (BU_h)^\perp.
\end{align}
The solution $\sigma \in L^{p'}(\Omega;\mathbb{R}^d)$ to \eqref{eq:dualProb} solves with unique function $\phi_h \in (BU_h)^\perp$ the saddle point problem
\begin{align}\label{eq:EulerLagrange2}
\begin{aligned}
\int_\Omega |\sigma|^{p'-2} \sigma \cdot \tau \dx - \int_\Omega \nabla \phi_h \cdot \tau \dx &= 0 &&\text{for all } \tau \in L^{p'}(\Omega;\mathbb{R}^d), \\
- \int_\Omega \nabla v_h \cdot \sigma \dx & = - F(v) &&\text{for all } v_h\in (BU_h)^\perp.
\end{aligned}
\end{align}
\begin{lemma}[Duality]\label{lem:Duality}
The solutions to \eqref{eq:EulerLagrange} and \eqref{eq:EulerLagrange2} are related via the identities 
\begin{align}\label{eq:Ident}
\sigma = |\nabla \psi_h|^{p-2} \nabla \psi_h,\qquad \nabla \psi_h = |\sigma|^{p'-2} \sigma,\qquad\text{and}\qquad \phi_h = \psi_h.
\end{align}
Furthermore, the minimal energies satisfy 
\begin{align}\label{eq:equalEnergies}
\frac{1}{p} \int_\Omega |\nabla \psi_h|^p \dx - F(\psi_h) = - \frac{1}{p'} \int_\Omega |\sigma|^{p'}\dx. 
\end{align}
\end{lemma}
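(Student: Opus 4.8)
The plan is to start from an arbitrary solution $(\sigma,\phi_h)$ of the saddle point problem \eqref{eq:EulerLagrange2} and to show directly that it coincides with the image of the unique solution $\psi_h$ of \eqref{eq:EulerLagrange} under the pointwise duality map. The only algebraic input is the pair of conjugacy identities $(p-1)p' = p$ and $(p-1)(p'-2) = 2-p$, which together express that the maps $\xi \mapsto |\xi|^{p-2}\xi$ and $\eta \mapsto |\eta|^{p'-2}\eta$ on $\mathbb{R}^d$ are mutually inverse. I would record these first, since they make the first two identities in \eqref{eq:Ident} logically equivalent and are used repeatedly below.

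First I would exploit the first equation of \eqref{eq:EulerLagrange2}. Testing it with arbitrary $\tau \in L^{p'}(\Omega;\mathbb{R}^d)$, and noting that both $|\sigma|^{p'-2}\sigma$ and $\nabla\phi_h$ lie in $L^p(\Omega;\mathbb{R}^d)$, a localization argument forces the pointwise identity $|\sigma|^{p'-2}\sigma = \nabla\phi_h$ almost everywhere in $\Omega$. Applying the inverse duality map together with the exponent bookkeeping above yields $\sigma = |\nabla\phi_h|^{p-2}\nabla\phi_h$. Substituting this into the second equation of \eqref{eq:EulerLagrange2} shows that $\phi_h \in (BU_h)^\perp$ satisfies $\int_\Omega |\nabla\phi_h|^{p-2}\nabla\phi_h\cdot\nabla v_h\dx = F(v_h)$ for all $v_h\in(BU_h)^\perp$, which is precisely \eqref{eq:EulerLagrange}. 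Since that problem has a unique solution, being the Euler--Lagrange equation of the strictly convex energy in \eqref{eq:discreteMinProb2} whose unique solvability is part~\ref{itm:exPsiH} of Lemma~\ref{lem:exAndEqui}, I conclude $\phi_h = \psi_h$. Feeding this back gives $\sigma = |\nabla\psi_h|^{p-2}\nabla\psi_h$ and, via the inverse map, $\nabla\psi_h = |\sigma|^{p'-2}\sigma$, establishing all three identities in \eqref{eq:Ident}.

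For the energy equality \eqref{eq:equalEnergies} I would test \eqref{eq:EulerLagrange} with the admissible function $v_h = \psi_h \in (BU_h)^\perp$ to obtain $F(\psi_h) = \int_\Omega |\nabla\psi_h|^p\dx$. Hence the left-hand side of \eqref{eq:equalEnergies} equals $(\tfrac1p - 1)\int_\Omega|\nabla\psi_h|^p\dx = -\tfrac{1}{p'}\int_\Omega|\nabla\psi_h|^p\dx$, using $1/p + 1/p' = 1$. On the other side, the identity $\sigma = |\nabla\psi_h|^{p-2}\nabla\psi_h$ gives $|\sigma|^{p'} = |\nabla\psi_h|^{(p-1)p'} = |\nabla\psi_h|^p$ pointwise, so $-\tfrac{1}{p'}\int_\Omega|\sigma|^{p'}\dx = -\tfrac{1}{p'}\int_\Omega|\nabla\psi_h|^p\dx$, matching the left-hand side.

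I expect the only genuinely delicate point to be the passage from the integrated first equation in \eqref{eq:EulerLagrange2} to the pointwise identity $|\sigma|^{p'-2}\sigma = \nabla\phi_h$: one must justify that pairing against \emph{all} $\tau\in L^{p'}(\Omega;\mathbb{R}^d)$ pins down the integrand almost everywhere, which relies on the integrability $|\sigma|^{p'-2}\sigma\in L^p(\Omega;\mathbb{R}^d)$ following from $(p'-1)p = p'$. Everything else reduces to the conjugate-exponent arithmetic and the already-established uniqueness for \eqref{eq:EulerLagrange}.
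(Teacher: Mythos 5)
Your proof is correct, but it runs the duality argument in the opposite direction from the paper. The paper starts from the primal solution $\psi_h$ of \eqref{eq:EulerLagrange}, defines the candidate pair $\sigma \coloneqq |\nabla \psi_h|^{p-2}\nabla\psi_h$ and $\phi_h \coloneqq \psi_h$, verifies by direct computation that this pair solves the saddle point system \eqref{eq:EulerLagrange2}, and then invokes uniqueness of solutions to \eqref{eq:EulerLagrange2}, justified there by a parenthetical appeal to the uniqueness of the dual minimizer in \eqref{eq:dualProb} plus the fact that the first line of \eqref{eq:EulerLagrange2} determines $\phi_h$. You instead take an arbitrary solution $(\sigma,\phi_h)$ of \eqref{eq:EulerLagrange2}, localize its first equation to the pointwise identity $\nabla\phi_h = |\sigma|^{p'-2}\sigma$, invert the duality map using the conjugate-exponent arithmetic, substitute into the second equation to see that $\phi_h\in (BU_h)^\perp$ solves \eqref{eq:EulerLagrange}, and conclude $\phi_h = \psi_h$ from the primal uniqueness in Lemma~\ref{lem:exAndEqui}~\ref{itm:exPsiH}. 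What your route buys: it needs only the cleanly established uniqueness of the primal minimizer, and it yields uniqueness of solutions to \eqref{eq:EulerLagrange2} as a byproduct, a point the paper treats rather tersely. What the paper's route buys: it is constructive on the dual side, exhibiting a solution of \eqref{eq:EulerLagrange2} and hence reproving its existence, whereas your argument presupposes that some solution of \eqref{eq:EulerLagrange2} exists (acceptable here, since the text preceding the lemma asserts that the minimizer of \eqref{eq:dualProb} solves \eqref{eq:EulerLagrange2} with a multiplier $\phi_h$). Your one delicate step, passing from the integrated first equation to the pointwise identity, is sound: with $g \coloneqq |\sigma|^{p'-2}\sigma - \nabla\phi_h \in L^{p}(\Omega;\mathbb{R}^d)$ one may test with $\tau = |g|^{p-2}g \in L^{p'}(\Omega;\mathbb{R}^d)$ to get $\lVert g \rVert_{L^p(\Omega)} = 0$. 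The energy identity is proved exactly as in the paper.
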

\begin{proof}
Let $\phi_h \in (BU_h)^\perp$ solve \eqref{eq:discreteMinProb2} and define the functions 
\begin{align*}
\sigma \coloneqq  |\nabla \psi_h|^{p-2} \nabla \psi_h \in L^{p'}(\Omega;\mathbb{R}^d)\qquad\text{and}\qquad \phi_h \coloneqq \psi_h \in (BU_h)^\perp. 
\end{align*}
Direct calculations show that these functions solve the saddle point problem in~\eqref{eq:EulerLagrange2}.
 Since the solution to \eqref{eq:EulerLagrange2} is unique (due to the uniqueness of the minimizer $\sigma$ and the fact that the first line in  \eqref{eq:EulerLagrange2} uniquely determines $\phi_h$ via the identity $|\sigma|^{p'-2} \sigma = \nabla \phi_h$), we obtain the equivalence stated in \eqref{eq:Ident}. 
Since $1/p+1/p'=1$ implies with~\eqref{eq:Ident} that $|\nabla \psi_h|^p = |\sigma|^{p'}$, the identity in~\eqref{eq:EulerLagrange} yields 
\begin{align*}
\frac{1}{p} \int_\Omega |\nabla \psi_h|^p \dx - F(\psi_h)  = \left(\frac{1}{p}-1\right) \int_\Omega |\nabla \psi_h|^p \dx = -\frac{1}{p'} \int |\sigma|^{p'}\dx.
\end{align*}
This shows \eqref{eq:equalEnergies} and concludes the proof.
\end{proof}
We want to solve the non-linear problem in \eqref{eq:EulerLagrange2} via the iterative scheme
\begin{align*}
\begin{aligned}
\int_\Omega |\sigma_n|^{p'-2} \sigma_{n+1} \cdot \tau \dx - \int_\Omega \nabla \phi_{h,n+1} \cdot \tau \dx &= 0 &&\text{for all } \tau \in L^{p'}(\Omega;\mathbb{R}^d), \\
- \int_\Omega \nabla v_h \cdot \sigma_{n+1} \dx & = - F(v_h) &&\text{for all } v_h\in (BU_h)^\perp.
\end{aligned}
\end{align*}
However, the resulting problems are in general not well posed since $\sigma_n$ might degenerate. 
We thus introduce the following regularization.
\section{Regularization}\label{sec:regularization}
Following \cite{DieningFornasierTomasiWank20} and \cite{BalciDieningStorn22}, we define for any relaxation interval $\zeta = [\zeta_-,\zeta_+] \subset (0,\infty)$ and all $t \geq 0$ the integrant 
\begin{align*}
\kappa^*_\zeta(t) \coloneqq \begin{cases}
\frac{1}{2}\zeta_-^{p'-2}t^2 + \left(\frac{1}{p'}-\frac{1}{2}\right)\zeta_-^{p'}&\text{for }t\leq \zeta_-,\\
\frac{1}{p'}t^{p'}&\text{for } \zeta_-\leq t\leq \zeta_+,\\
\frac{1}{2}\zeta_+^{p'-2}t^2+\left(\frac{1}{p'}-\frac{1}{2}\right)\zeta_+^{p'}&\text{for }\zeta_+ \leq t.
\end{cases}
\end{align*}
We furthermore define for all $\tau \in L^{p'}(\Omega;\mathbb{R}^d)$ the energies
\begin{align*}
\mathcal{J}^*_\zeta(\tau) \coloneqq \int_\Omega \kappa^*_\zeta(|\tau|)\dx \qquad\text{and}\qquad 
\mathcal{J}^*(\tau)\coloneqq \frac{1}{p'} \int_\Omega |\tau|^{p'} \dx.
\end{align*}
Notice that the regularized energy $\mathcal{J}^*_\zeta(\tau_h)$  equals infinity if $\tau \in L^{p'}(\Omega;\mathbb{R}^d) \setminus L^2(\Omega;\mathbb{R}^d)$. Furthermore, the relaxed energy is monotone with respect to the relaxation interval in the sense that all $\tau \in L^{p'}(\Omega;\mathbb{R}^d)$ and relaxation intervals $\zeta^2 = [\zeta^2_-,\zeta^2_+] \subset \zeta^1 = [\zeta^1_-,\zeta^1_+] \subset (0,\infty)$ satisfy
\begin{align*}
\mathcal{J}^*(\tau) \leq \mathcal{J}^*_{\zeta^1}(\tau)\leq \mathcal{J}^*_{\zeta^2}(\tau).
\end{align*}
The direct method in calculus of variations verifies the existence of a unique minimizer $\sigma_\zeta$ of $\mathcal{J}_\zeta^*$ in $\Sigma$ in the sense that
\begin{align}\label{eq:assumptionExistence}
\sigma_\zeta = \argmin_{\tau \in \Sigma} \mathcal{J}_\zeta^*(\tau_h).
\end{align}
In the following we investigate the convergence of $\sigma_\zeta$ towards the minimizer $\sigma\in \Sigma$ in~\eqref{eq:dualProb}. Rather than investigating convergence in the $L^p(\Omega)$ norm, we investigate the convergence of the energies. This energy difference leads to the following bound.
\begin{lemma}[Notion of distance]\label{lem:NotionOfDistance}
Let $\sigma\in \Sigma$ be the minimizer in~\eqref{eq:dualProb} and let $\tau \in \Sigma$. Then we have 
\begin{align*}
\lVert |\sigma| + |\sigma - \tau| \rVert_{L^{p'}(\Omega)}^{p'-2}
\lVert \sigma - \tau \rVert_{L^{p'}(\Omega)}^2 & \lesssim \mathcal{J}^*(\tau) - \mathcal{J}^*(\sigma) \lesssim \lVert \sigma - \tau \rVert_{L^{p'}(\Omega)}^{p'}.
\end{align*}
Furthermore, we have the lower bound 
\begin{align*}
\lVert |\tau| + |\sigma - \tau| \rVert_{L^{p'}(\Omega)}^{p'-2}
\lVert \sigma - \tau \rVert_{L^{p'}(\Omega)}^2 \lesssim \mathcal{J}^*(\tau) - \mathcal{J}^*(\sigma).
\end{align*}
The hidden constants depend on $p$ but are independent of the solution $\sigma$.
\end{lemma}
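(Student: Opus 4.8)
The plan is to turn the energy difference into a single pointwise quantity by exploiting that $\sigma$ minimizes the smooth convex functional $\mathcal{J}^*$ over the affine space $\Sigma$. Writing $\Phi(a)\coloneqq \tfrac{1}{p'}|a|^{p'}$ for $a\in\mathbb{R}^d$, so that $D\Phi(a)=|a|^{p'-2}a$, the Euler--Lagrange equation for \eqref{eq:dualProb} reads $\int_\Omega |\sigma|^{p'-2}\sigma\cdot(\tau-\sigma)\dx=0$ for every $\tau\in\Sigma$, since $\tau-\sigma$ lies in the linear space underlying the affine set $\Sigma$. Subtracting this vanishing term, I would rewrite
\begin{align*}
\mathcal{J}^*(\tau)-\mathcal{J}^*(\sigma)=\int_\Omega \big[\Phi(\tau)-\Phi(\sigma)-D\Phi(\sigma)\cdot(\tau-\sigma)\big]\dx,
\end{align*}
so that the entire statement reduces to estimating the pointwise Bregman distance of $\Phi$ from above and below.

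The central ingredient is the pointwise equivalence, valid for $1<p'<2$ and all $a,b\in\mathbb{R}^d$,
\begin{align*}
\Phi(b)-\Phi(a)-D\Phi(a)\cdot(b-a)\sim \big(|a|+|b-a|\big)^{p'-2}|b-a|^2,
\end{align*}
with constants depending only on $p'$ (equivalently on $p$). This is the standard convexity estimate for the $N$-function $t\mapsto t^{p'}/p'$ and its shifts; I would either cite it from \cite{DieningFornasierTomasiWank20,BalciDieningStorn22} or prove it directly via Taylor's formula $\Phi(b)-\Phi(a)-D\Phi(a)\cdot(b-a)=\int_0^1(1-s)\,D^2\Phi(a+s(b-a))[b-a,b-a]\,\mathrm{d}s$, the sandwich $(p'-1)|c|^{p'-2}|w|^2\le D^2\Phi(c)[w,w]\le |c|^{p'-2}|w|^2$, and the auxiliary fact $\int_0^1(1-s)|a+s(b-a)|^{p'-2}\,\mathrm{d}s\sim(|a|+|b-a|)^{p'-2}$, which converges because $p'-2>-1$. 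This step is the main obstacle, precisely because $p'<2$ makes the Hessian of $\Phi$ degenerate, so the comparability of the integrated weight with $(|a|+|b-a|)^{p'-2}$ requires a careful case distinction near $a\approx 0$ and $|b-a|\gg|a|$. Integrating the equivalence over $\Omega$ with $a=\sigma(x)$ and $b=\tau(x)$ then yields
\begin{align*}
\mathcal{J}^*(\tau)-\mathcal{J}^*(\sigma)\sim \int_\Omega \big(|\sigma|+|\sigma-\tau|\big)^{p'-2}|\sigma-\tau|^2\dx.
\end{align*}

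It remains to convert this weighted integral into the stated norms. For the upper bound I would use that $p'-2<0$ together with $|\sigma-\tau|\le|\sigma|+|\sigma-\tau|$ gives the pointwise inequality $(|\sigma|+|\sigma-\tau|)^{p'-2}|\sigma-\tau|^2\le|\sigma-\tau|^{p'}$, whose integral is $\lVert\sigma-\tau\rVert_{L^{p'}(\Omega)}^{p'}$. For the first lower bound, writing $g\coloneqq|\sigma|+|\sigma-\tau|$ and $h\coloneqq|\sigma-\tau|$, I would exploit the pointwise identity $h^{p'}=(g^{p'-2}h^2)^{p'/2}g^{(2-p')p'/2}$ and apply H\"older's inequality with the conjugate exponents $2/p'$ and $2/(2-p')$ to obtain $\lVert h\rVert_{L^{p'}(\Omega)}^{p'}\le(\int_\Omega g^{p'-2}h^2\dx)^{p'/2}\lVert g\rVert_{L^{p'}(\Omega)}^{p'(2-p')/2}$; raising to the power $2/p'$ and rearranging produces exactly $\lVert g\rVert_{L^{p'}(\Omega)}^{p'-2}\lVert h\rVert_{L^{p'}(\Omega)}^2\le\int_\Omega g^{p'-2}h^2\dx$, which is the first lower bound. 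Finally, the second lower bound follows because the two weights are pointwise comparable, $|\tau|+|\sigma-\tau|\sim|\sigma|+|\sigma-\tau|$ (each dominates the other up to the factor $2$ by the triangle inequality), so passing from $|\sigma|$ to $|\tau|$ only alters the $p'$-dependent constant, the comparison of the $L^{p'}$-norms surviving the negative power $p'-2$.
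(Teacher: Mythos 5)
Your proof is correct, and its overall architecture matches the paper's: reduce the energy difference to the integrated shifted quadratic form $\int_\Omega(|\sigma|+|\sigma-\tau|)^{p'-2}|\sigma-\tau|^2\dx$, then get the upper bound pointwise (using $p'-2<0$) and the norm-product lower bounds by a H\"older argument — your splitting $h^{p'}=(g^{p'-2}h^2)^{p'/2}g^{(2-p')p'/2}$ is precisely the standard proof of the reverse H\"older inequality that the paper invokes. The one genuine difference is how minimality of $\sigma$ enters. The paper sandwiches the energy difference between the monotonicity form $\int_\Omega(|\tau|^{p'-2}\tau-|\sigma|^{p'-2}\sigma)\cdot(\tau-\sigma)\dx$, from above by convexity and from below by citing \cite[Lem.~42]{DieningFornasierTomasiWank20}, and then converts that form into the shifted quadratic one via the pointwise equivalence \eqref{eq:asfgsra}. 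You instead use the Euler--Lagrange equation $\int_\Omega|\sigma|^{p'-2}\sigma\cdot(\tau-\sigma)\dx=0$ to write the energy difference \emph{exactly} as the integrated Bregman distance of $\Phi(a)=|a|^{p'}/p'$, and then invoke (or re-prove via Taylor expansion, the Hessian eigenvalue bounds, and the averaged-weight comparability $\int_0^1(1-s)|a+s(b-a)|^{p'-2}\,\mathrm{d}s\eqsim(|a|+|b-a|)^{p'-2}$, valid since $p'-2>-1$) the pointwise equivalence of that Bregman distance with $(|a|+|b-a|)^{p'-2}|b-a|^2$. This buys you an identity instead of a two-sided comparison, so a single pointwise lemma suffices where the paper needs two cited ones; the price is that this lemma must itself be proved or cited, which you do at the same level of rigor as the paper. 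Your derivation of the second lower bound also differs mildly: you use the pointwise comparability $|\tau|+|\sigma-\tau|\eqsim|\sigma|+|\sigma-\tau|$ (up to a factor $2$, correctly noting that the negative exponent $p'-2$ flips the resulting norm inequality harmlessly), whereas the paper swaps the roles of $\sigma$ and $\tau$ in the symmetric equivalence \eqref{eq:asfgsra}; both arguments are valid.
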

\begin{proof}
Since this result is well-known in the context of the $p$-Laplacian, let us briefly summarize its derivation.
Let $\sigma$ and $\tau$ be as in the lemma. Since $(\mathcal{J}^*)'(\sigma)(\tau - \sigma) = 0$ due to the minimization property of $\sigma$, the convexity of $\mathcal{J}^*$ yields
\begin{align*}
\mathcal{J}^*(\tau) - \mathcal{J}^*(\sigma)& \leq (\mathcal{J}^*)'(\tau) (\tau - \sigma) = \big((\mathcal{J}^*)'(\tau) - (\mathcal{J}^*)'(\sigma)\big) (\tau - \sigma)\\
& = \int_\Omega (|\tau|^{p'-2} \tau - |\sigma|^{p'-2} \sigma) \cdot (\tau - \sigma) \dx.
\end{align*}
Further arguments for the integrand as for example shown in \cite[Lem.~42]{DieningFornasierTomasiWank20} lead to the lower bound
\begin{align*}
\int_\Omega (|\tau|^{p'-2} \tau - |\sigma|^{p'-2} \sigma) \cdot (\tau - \sigma) \dx\lesssim \mathcal{J}^*(\tau) - \mathcal{J}^*(\sigma).
\end{align*}
Additionally, the equivalence $(|P|^{p'-2}P - |Q|^{p'-2}Q) \cdot (P-Q) \eqsim (|Q| + |P-Q|)^{p'-2} |P-Q|^2$ for all  $P,Q\in \mathbb{R}^d$ as shown in \cite[Lem.~39]{DieningFornasierTomasiWank20} implies  
\begin{align}\label{eq:asfgsra}
\int_\Omega (|\tau|^{p'-2} \tau - |\sigma|^{p'-2} \sigma) \cdot (\tau - \sigma) \dx \eqsim \int_\Omega (|\sigma| + |\sigma - \tau|)^{p'-2} |\sigma - \tau|^2\dx.
\end{align}
These observations lead to the upper bound in the lemma. The lower bound follows from H\"older's reverse inequality
\begin{align*}
&\left(\int_\Omega \big((|\sigma| + |\sigma - \tau|)^{p'-2}\big)^{\tfrac{1}{1-q}} \dx\right)^{1-q} \left( \int_\Omega \big(|\sigma - \tau|^2\big)^{\tfrac{1}{q}} \dx\right)^q\\
&\qquad\qquad\qquad \leq \int_\Omega (|\sigma| + |\sigma - \tau|)^{p'-2} |\sigma - \tau|^2\dx\qquad\text{with }q \coloneqq \frac{2}{p'}.
\end{align*}
Exchanging the role of $\sigma$ and $\tau$ in \eqref{eq:asfgsra} leads to the alternative lower bound.
\end{proof}
We have the following convergence result for the energy differences. 
\begin{proposition}[Convergence in $\zeta$]\label{prop:convInZeta}
Let $\psi_h \in V_h$ denote the solution to \eqref{eq:discreteMinProb2} and let $\sigma$ and $\sigma_\zeta$ denote the minimizers in \eqref{eq:dualProb} and \eqref{eq:assumptionExistence}, respectively.
Their energy difference is bounded for all relaxation intervals $\zeta = [\zeta_-,\zeta_+]\subset (0,\infty)$ and $r > 2$ by
\begin{align*}
 \mathcal{J}^*(\sigma_\zeta) - \mathcal{J}^*(\sigma)
\leq \mathcal{J}_\zeta^*(\sigma_\zeta) - \mathcal{J}^*(\sigma) &\leq \frac{|\Omega|}{p'}\zeta_-^{p'} + \frac{1}{p'}\zeta_+^{-(r-p')} \lVert  \sigma \rVert^{r}_{L^{r}(\Omega)} \\
 & = \frac{|\Omega|}{p'}\zeta_-^{p'} + \frac{1}{p'}\zeta_+^{-(r-p')} \lVert \nabla \psi_h \rVert^{r(p-1)}_{L^{r(p-1)}(\Omega)}.
\end{align*}
\end{proposition}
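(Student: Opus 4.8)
The plan is to eliminate $\sigma_\zeta$ from the quantity to be bounded by using the minimization properties, and then to reduce everything to an elementary pointwise estimate on the integrand $\kappa^*_\zeta(t) - \tfrac{1}{p'}t^{p'}$. The left inequality is immediate: applying the pointwise comparison $\mathcal{J}^*(\tau) \le \mathcal{J}^*_\zeta(\tau)$ recorded above to $\tau = \sigma_\zeta$ gives $\mathcal{J}^*(\sigma_\zeta) \le \mathcal{J}^*_\zeta(\sigma_\zeta)$. For the main inequality, the key structural observation is that $\sigma \in \Sigma$ is admissible for the regularized problem~\eqref{eq:assumptionExistence}, so the minimization property of $\sigma_\zeta$ yields $\mathcal{J}^*_\zeta(\sigma_\zeta) \le \mathcal{J}^*_\zeta(\sigma)$. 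Subtracting $\mathcal{J}^*(\sigma)$ then gives
\[
\mathcal{J}^*_\zeta(\sigma_\zeta) - \mathcal{J}^*(\sigma) \le \mathcal{J}^*_\zeta(\sigma) - \mathcal{J}^*(\sigma) = \int_\Omega \Big(\kappa^*_\zeta(|\sigma|) - \tfrac{1}{p'}|\sigma|^{p'}\Big)\dx,
\]
which depends only on $\sigma$.

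I would then estimate this integrand pointwise following the three branches of $\kappa^*_\zeta$. On $\{\zeta_- \le |\sigma| \le \zeta_+\}$ the two integrands coincide, so the contribution vanishes. On $\{|\sigma| \le \zeta_-\}$, the function $\kappa^*_\zeta$ is monotone increasing there with $\kappa^*_\zeta(\zeta_-) = \tfrac{1}{p'}\zeta_-^{p'}$, and since $\tfrac{1}{p'}|\sigma|^{p'} \ge 0$ the integrand is bounded by $\tfrac{1}{p'}\zeta_-^{p'}$, contributing at most $\tfrac{|\Omega|}{p'}\zeta_-^{p'}$. The genuine work is on the region $\{|\sigma| \ge \zeta_+\}$, and I expect this to be the main (though still mild) obstacle.

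On $\{|\sigma| \ge \zeta_+\}$ I would rescale by setting $s \coloneqq |\sigma(x)|/\zeta_+ \ge 1$, which turns $\kappa^*_\zeta(|\sigma|) - \tfrac{1}{p'}|\sigma|^{p'}$ into $\zeta_+^{p'}\big(\tfrac{1}{2}s^2 + \tfrac{1}{p'} - \tfrac{1}{2} - \tfrac{1}{p'}s^{p'}\big)$ and the target term $\tfrac{1}{p'}\zeta_+^{-(r-p')}|\sigma|^r$ into $\tfrac{1}{p'}\zeta_+^{p'}s^r$. The remaining elementary inequality is
\[
\tfrac{1}{2}(s^2 - 1) + \tfrac{1}{p'}(1 - s^{p'}) \le \tfrac{1}{p'}s^r \qquad\text{for } s\ge 1,
\]
which holds because $p' < 2$ forces $1 - s^{p'} \le 0$, while $s^2 \le s^r$ (as $r > 2$) and $\tfrac{1}{2} \le \tfrac{1}{p'}$ dispose of the remaining term. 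Integrating this branch contributes at most $\tfrac{1}{p'}\zeta_+^{-(r-p')}\lVert \sigma \rVert_{L^r(\Omega)}^r$, and summing the three contributions gives the stated bound. Finally, the closing identity is a direct substitution using the duality relation $\sigma = |\nabla\psi_h|^{p-2}\nabla\psi_h$ from Lemma~\ref{lem:Duality}, which gives $|\sigma| = |\nabla\psi_h|^{p-1}$ and hence $\lVert \sigma \rVert_{L^r(\Omega)}^r = \lVert \nabla\psi_h \rVert_{L^{r(p-1)}(\Omega)}^{r(p-1)}$; the right-hand side is finite since $\nabla\psi_h$ is piecewise polynomial and thus bounded.
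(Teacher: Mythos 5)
Your proof is correct, and it is a self-contained version of what the paper handles mostly by citation: the paper's own proof disposes of the two inequalities with a single sentence saying they ``follow as in \cite[Thm.~3.1]{BalciDieningStorn22}'', and only writes out the final equality. Your argument supplies exactly the details one would expect behind that citation: the left inequality from the pointwise monotonicity $\mathcal{J}^*\le\mathcal{J}^*_\zeta$, the middle inequality from the admissibility of $\sigma\in\Sigma$ in the regularized problem \eqref{eq:assumptionExistence} (giving $\mathcal{J}^*_\zeta(\sigma_\zeta)\le\mathcal{J}^*_\zeta(\sigma)$), and then a three-branch pointwise estimate of $\kappa^*_\zeta(|\sigma|)-\tfrac{1}{p'}|\sigma|^{p'}$, with the rescaling $s=|\sigma|/\zeta_+$ handling the only nontrivial branch; your closing identity $|\sigma|=|\nabla\psi_h|^{p-1}$ via Lemma~\ref{lem:Duality} is precisely the paper's own computation. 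One cosmetic slip: in the elementary inequality for $s\ge 1$ you justify $1-s^{p'}\le 0$ by ``$p'<2$''; the correct reason is simply $s\ge 1$ (any positive exponent works), whereas $p'\le 2$ is what you actually need --- and correctly use --- for $\tfrac{1}{2}\le\tfrac{1}{p'}$. This does not affect the validity of the chain $\tfrac{1}{2}(s^2-1)+\tfrac{1}{p'}(1-s^{p'})\le \tfrac{1}{2}s^2\le\tfrac{1}{p'}s^r$, so the bound and the concluding substitution stand as written.
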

\begin{proof}
This first two inequalities follow as in~\cite[Thm.~3.1]{BalciDieningStorn22}. 
Since $|\nabla \psi_h|^p = |\sigma|^{p'}$ due to Lemma~\ref{lem:Duality}, the equality then follows from the identity 
\begin{align*}
&\int_\Omega |\sigma|^r \dx = \int_\Omega |\sigma|^{p' \tfrac{r}{p'}} = \int_\Omega |\nabla \psi |^{r \tfrac{p}{p'}} \dx = \int_\Omega |\nabla \psi_h |^{r(p-1)} \dx.\qedhere
\end{align*}
\end{proof}
\begin{remark}[Regularity]
The convergence result in Proposition~\ref{prop:convInZeta} assumes the regularity property $\psi_h \in W_0^{1,r(p-1)}(\Omega)$. Such a result is indeed  true for all $r \leq \infty$, since $\psi_h \in V_h= \mathcal{L}^1_{k+\delta ,0}(\tria)$ is a function in a finite dimensional space. However, the norm might increase as the mesh is refined. In practical computations this issue does not seem to cause problems, since we can control the impact of the regularization by comparing the energies $\mathcal{J}_\zeta^*(\tau)$ and $\mathcal{J}^*(\tau)$, cf.~Section~\ref{sec:AdaptiveScheme}, and our numerical experiments in Section~\ref{sec:Applications} do not indicate a significantly decreased rate of convergence. 
\end{remark}
\section{Relaxed \Kacanov{} scheme}\label{sec:KacanovScheme}
In this section we introduce an iterative scheme that converges towards the minimizer $\sigma_\zeta$ in \eqref{eq:assumptionExistence} with relaxation interval $\zeta = [\zeta_-,\zeta_+]\subset (0,\infty)$. Set $b \vee c \coloneqq \max \lbrace b,c\rbrace$ and $b\wedge c\coloneqq \min\lbrace b,c\rbrace$ for all $b,c\in\mathbb{R}$. Given some initial value $\sigma_0\in L^{p'}(\Omega;\mathbb{R}^d)$, we compute iteratively for any $n\in \mathbb{N}_0$ the solution $\sigma_{n+1} \in L^{p'}(\Omega;\mathbb{R}^d)$ and $\psi_{h,n+1}\in (BU_h)^\perp$ satisfying for all $\tau \in L^{p'}(\Omega;\mathbb{R}^d)$ and $v_h\in (BU_h)^\perp$
\begin{align}\label{eq:KacanovRelaxed}
\begin{aligned}
\int_\Omega (\zeta_-\vee |\sigma_n|\wedge \zeta_+) ^{p'-2} \sigma_{n+1} \cdot \tau \dx - \int_\Omega \nabla \psi_{h,n+1} \cdot \tau \dx &= 0,\\
- \int_\Omega \nabla v_h \cdot \sigma_{n+1} \dx & = - F(v_h).
\end{aligned}
\end{align}
The following proposition shows convergence of the solutions $\sigma_n$ towards the minimizer $\sigma_\zeta$ in~\eqref{eq:assumptionExistence}.
\begin{proposition}[Convergence]\label{prop:linConvergence}
There exists a constant $\rho \lesssim (\zeta_-/\zeta_+)^{2-p'}$ such that
\begin{align*}
\rho \big(\mathcal{J}_\zeta^*(\sigma_{n}) - \mathcal{J}_\zeta^*(\sigma_\zeta)\big) \leq \mathcal{J}_\zeta^*(\sigma_n) - \mathcal{J}_\zeta^*(\sigma_{n+1}) \qquad\text{for all }n\in \mathbb{N}.
\end{align*}
Moreover, we have the convergence result
\begin{align*}
\mathcal{J}_\zeta^*(\sigma_{n+1}) - \mathcal{J}_\zeta^*(\sigma_\zeta) \leq (1-\rho)^n \big( \mathcal{J}_\zeta^*(\sigma_0) - \mathcal{J}_\zeta^*(\sigma_\zeta) \big)\qquad\text{for all }n\in \mathbb{N}.
\end{align*}
\end{proposition}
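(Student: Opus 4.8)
The claim is a linear convergence result for a relaxed Kačanov iteration, which is a fixed-point-type scheme minimizing the regularized energy $\mathcal{J}_\zeta^*$ over the affine space $\Sigma$. The natural strategy is the standard energy-descent argument familiar from Kačanov/Zarantonello analysis: first establish an \emph{energy decrease} per step $\mathcal{J}_\zeta^*(\sigma_n) - \mathcal{J}_\zeta^*(\sigma_{n+1})$ that is bounded below by a quadratic ``distance'' between $\sigma_n$ and $\sigma_{n+1}$, then relate that same distance to the \emph{energy error} $\mathcal{J}_\zeta^*(\sigma_n) - \mathcal{J}_\zeta^*(\sigma_\zeta)$ via the coercivity/convexity of the regularized integrand $\kappa_\zeta^*$. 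Combining these two estimates yields the contraction factor $\rho$, and the second (geometric-decay) assertion then follows by a one-line induction on the first.

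**Key steps in order.**

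First I would introduce the weight $w_n \coloneqq (\zeta_- \vee |\sigma_n| \wedge \zeta_+)^{p'-2}$ appearing in \eqref{eq:KacanovRelaxed} and observe that, since $w_n \in [\zeta_+^{p'-2}, \zeta_-^{p'-2}]$ pointwise (note $p' < 2$, so the exponent $p'-2$ is negative and the bounds invert), the bilinear form $(\tau, \upsilon) \mapsto \int_\Omega w_n\, \tau \cdot \upsilon \dx$ is uniformly elliptic with constants controlled by $\zeta_-$ and $\zeta_+$. The iterate $\sigma_{n+1}$ is the minimizer over $\Sigma$ of the quadratic surrogate energy $\tau \mapsto \tfrac{1}{2}\int_\Omega w_n |\tau|^2 \dx$, since the saddle-point system \eqref{eq:KacanovRelaxed} is exactly its Euler--Lagrange condition with the constraint $\tau \in \Sigma$ enforced by $\psi_{h,n+1}$. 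Second, I would exploit the convexity of $\kappa_\zeta^*$: the key inequality is that the surrogate quadratic energy majorizes $\mathcal{J}_\zeta^*$ up to its value at the linearization point, so that minimizing the surrogate forces a genuine decrease of $\mathcal{J}_\zeta^*$. Concretely one shows
\begin{align*}
\mathcal{J}_\zeta^*(\sigma_n) - \mathcal{J}_\zeta^*(\sigma_{n+1}) \gtrsim \int_\Omega w_n |\sigma_n - \sigma_{n+1}|^2 \dx,
\end{align*}
which is the per-step descent. Third, and symmetrically, I would bound the energy error from \emph{above} by the same weighted quadratic: using that $\sigma_\zeta$ is the constrained minimizer of $\mathcal{J}_\zeta^*$ and that $\sigma_{n+1}$ solves the linearized problem, a convexity estimate gives
\begin{align*}
\mathcal{J}_\zeta^*(\sigma_n) - \mathcal{J}_\zeta^*(\sigma_\zeta) \lesssim (\zeta_-/\zeta_+)^{2-p'} \int_\Omega w_n |\sigma_n - \sigma_{n+1}|^2 \dx,
\end{align*}
where the ratio $(\zeta_-/\zeta_+)^{2-p'}$ enters precisely because one must trade the weight $w_n$ against the Hessian of $\kappa_\zeta^*$ across the relaxation interval, and these can differ by a factor comparable to $(\zeta_-/\zeta_+)^{p'-2}$. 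Dividing the two displays produces the contraction with $\rho \lesssim (\zeta_-/\zeta_+)^{2-p'}$. Finally, I would close by induction: the first inequality rearranges to $\mathcal{J}_\zeta^*(\sigma_{n+1}) - \mathcal{J}_\zeta^*(\sigma_\zeta) \leq (1-\rho)\big(\mathcal{J}_\zeta^*(\sigma_n) - \mathcal{J}_\zeta^*(\sigma_\zeta)\big)$, and iterating from $n=0$ gives the geometric bound.

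**Main obstacle.**

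The delicate point is the third step — the \emph{upper} bound on the energy error by the weighted distance $\int_\Omega w_n |\sigma_n - \sigma_{n+1}|^2 \dx$ — together with pinning down the dependence of $\rho$ on $(\zeta_-/\zeta_+)^{2-p'}$. Here one cannot simply quote convexity of $\mathcal{J}_\zeta^*$: the surrogate uses the \emph{old} weight $w_n$ built from $|\sigma_n|$, whereas the Hessian of $\kappa_\zeta^*$ controlling the true energy gap is evaluated along the segment between $\sigma_{n+1}$ (or $\sigma_n$) and $\sigma_\zeta$, so the two weights need not be pointwise comparable without loss. The relaxation clipping $\zeta_- \vee \,\cdot\, \wedge \zeta_+$ is exactly what makes them comparable, but only up to the factor $(\zeta_-/\zeta_+)^{2-p'}$, and it is this careful two-sided comparison — presumably mirroring the argument for the $p$-Laplacian in \cite{DieningFornasierTomasiWank20,BalciDieningStorn22} — that requires the most care. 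Everything else (the quadratic descent and the final induction) is routine once the uniform ellipticity of $w_n$ and the convexity of $\kappa_\zeta^*$ are in place.
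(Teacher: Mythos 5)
Your overall architecture is the right one, and it is in fact the argument the paper points to: the paper's entire proof of Proposition~\ref{prop:linConvergence} is the citation ``this result follows as in \cite[Sec.~4]{BalciDieningStorn22}'', and the alternating-minimization skeleton you describe (the clipped weight $w_n$, the identification of $\sigma_{n+1}$ as the minimizer over $\Sigma$ of the quadratic surrogate $\tau\mapsto\tfrac12\int_\Omega w_n|\tau|^2\dx$, a per-step descent bound, an error-to-step comparison, and the closing induction) is precisely that argument. Your step 1, your step 2 descent bound $\mathcal{J}^*_\zeta(\sigma_n)-\mathcal{J}^*_\zeta(\sigma_{n+1})\geq\tfrac12\int_\Omega w_n|\sigma_n-\sigma_{n+1}|^2\dx$, and the final induction are all correct.

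The genuine problem is that your central step 3 is stated with the ratio inverted, and as written it is not merely wrong but self-refuting. Since $p'<2$ and $\zeta_-<\zeta_+$, the factor $(\zeta_-/\zeta_+)^{2-p'}$ is strictly smaller than $1$ (and tends to $0$ as the interval widens). Combining your step 2 with the trivial bound $\mathcal{J}^*_\zeta(\sigma_{n+1})\geq\mathcal{J}^*_\zeta(\sigma_\zeta)$ gives $\int_\Omega w_n|\sigma_n-\sigma_{n+1}|^2\dx\leq 2\big(\mathcal{J}^*_\zeta(\sigma_n)-\mathcal{J}^*_\zeta(\sigma_\zeta)\big)$, so your displayed step 3 would imply $\mathcal{J}^*_\zeta(\sigma_n)-\mathcal{J}^*_\zeta(\sigma_\zeta)\leq 2(\zeta_-/\zeta_+)^{2-p'}\big(\mathcal{J}^*_\zeta(\sigma_n)-\mathcal{J}^*_\zeta(\sigma_\zeta)\big)$, forcing the error to vanish whenever $\zeta_+/\zeta_-$ is large, i.e.\ convergence in a single step, which is absurd. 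The correct comparison carries the \emph{large} factor:
\[
\mathcal{J}^*_\zeta(\sigma_n)-\mathcal{J}^*_\zeta(\sigma_\zeta)\;\lesssim\;(\zeta_+/\zeta_-)^{2-p'}\int_\Omega w_n|\sigma_n-\sigma_{n+1}|^2\dx,
\]
and dividing by the descent bound then yields $\rho\eqsim(\zeta_-/\zeta_+)^{2-p'}$, consistent with the proposition. To prove this display one uses: the envelope identity $D\mathcal{J}^*_\zeta(\sigma_n)[\tau]=\int_\Omega w_n\,\sigma_n\cdot\tau\dx$; the orthogonality $\int_\Omega w_n\,\sigma_{n+1}\cdot(\sigma_n-\sigma_\zeta)\dx=0$, obtained by testing the first equation of \eqref{eq:KacanovRelaxed} with $\tau=\sigma_n-\sigma_\zeta$ and using $\psi_{h,n+1}\in(BU_h)^\perp$ together with $\sigma_n,\sigma_\zeta\in\Sigma$; convexity of $\mathcal{J}^*_\zeta$; Cauchy--Schwarz in the $w_n$-weighted inner product; the pointwise bound $w_n\leq\zeta_-^{p'-2}$; and the strong convexity of $\mathcal{J}^*_\zeta$ with parameter $\gtrsim\zeta_+^{p'-2}$ (the smallest eigenvalue of $D^2\kappa^*_\zeta$), which converts $\lVert\sigma_n-\sigma_\zeta\rVert_{L^2(\Omega)}^2$ back into the energy error. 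Note that in your prose you even write the correct factor $(\zeta_-/\zeta_+)^{p'-2}=(\zeta_+/\zeta_-)^{2-p'}$, contradicting your own display and your ``main obstacle'' paragraph; since this two-sided comparison is exactly the step you identify as the crux and leave unproven, the proposal's central estimate is currently both mis-stated and missing, although it is repairable along the lines above without changing your architecture.
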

\begin{proof}
This result follows as in \cite[Sec.~4]{BalciDieningStorn22}.
\end{proof}
To solve the problem in \eqref{eq:KacanovRelaxed}, we utilize duality to obtain a primal problem which seeks $\psi_{h,n+1} \in (BU_h)^\perp$ such that for all $v_h\in (BU_h)^\perp$
\begin{align}\label{eq:KacanovPrimal}
\int_\Omega (\zeta_-\vee |\sigma_n|\wedge \zeta_+)^{2-p'} \nabla \psi_{h,n+1} \cdot \nabla v_h \dx = F(v_h).
\end{align}
The corresponding saddle point problem seeks $\psi_{h,n+1}\in V_h$ and $\fru_{h,n+1}\in U_h$ with
\begin{align*}
\begin{aligned}
\int_\Omega (\zeta_-\vee |\sigma_n|\wedge \zeta_+)^{2-p'} \nabla \psi_{h,n+1} \cdot \nabla v_h \dx + b(\fru_{h,n+1},v_h) &= F(v_h)&&\text{for all }v_h\in V_h,\\
b(u_h,\psi_{h,n+1}) & = 0&&\text{for all }u_h\in U_h.
\end{aligned}
\end{align*}
\begin{proposition}[Equivalence]
The solution $\psi_{h,n+1} \in (BU_h)^\perp$ to \eqref{eq:KacanovPrimal} and $\sigma_{n+1}\in \Sigma$ to \eqref{eq:KacanovRelaxed} are related via the identity
\begin{align*}
\sigma_{n+1} = (\zeta_-\vee |\sigma_n|\wedge \zeta_+)^{2-p'} \nabla \psi_{h,n+1}.
\end{align*}
\end{proposition}
\begin{proof}
The same arguments as in the proof of Lemma~\ref{lem:Duality} yield the proposition.
\end{proof}
The problem in \eqref{eq:KacanovPrimal} can be solved iteratively, leading to a convergent scheme. Adaptivity, as discussed in Section~\ref{sec:AdaptiveScheme} below, might improve the convergence.
\section{Error control}\label{sec:ErrorControl}
The a priori and a posteriori error control for minimal residual methods is well established, see for example \cite{CarstensenDemkowiczGopalakrishnan14,CarstensenDemkowiczGopalakrishnan16,MugaZee20,Storn20,MonsuurStevensonStorn23}.
Let us briefly adapt the proofs therein to our situation. We assume that
\begin{enumerate}
\item there exists a unique solution $\fru \in W^{1,p'}_0(\Omega)$ with $B\fru = F$ in $W^{-1,p'}(\Omega)$ and\label{itm:Ass1}
\item there exist a Fortin operator $\Pi \colon W^{1,p}_0(\Omega) \to V_h$ with continuity constant $\lVert \Pi \rVert< \infty $ in the sense that for all $u_h\in U_h$ and $v\in W^{1,p}_0(\Omega)$ \label{itm:Ass3}
\begin{align}\label{eq:Fortin}
b(u_h,v-\Pi v) = 0 \quad \text{and}\quad \lVert \nabla \Pi v \rVert_{L^p(\Omega)} \leq \lVert \Pi \rVert\, \lVert  \nabla v \rVert_{L^p(\Omega)}.
\end{align}
\end{enumerate}
\begin{proposition}[Error control]\label{prop:errorControl}
Suppose that the assumptions in \ref{itm:Ass1} and \ref{itm:Ass3} are true. 
Then the solution $\fru$ to $B\fru = F$ in $W^{-1,p'}(\Omega)$ and $\fru_h\in U_h$ to \eqref{eq:discreteMinProb} satisfy
\begin{align}\label{eq:aPriori1}
\lVert B \fru - B\fru_h \rVert_{W^{-1,p'}(\Omega)} \leq (1+2\,\lVert \Pi \rVert) \min_{u_h\in U_h} \lVert B \fru - B u_h \rVert_{W^{-1,p'}(\Omega)}.
\end{align}
Moreover, with oscillation $\textup{osc}(F) \coloneqq \sup_{v\in W^{1,p}(\Omega)\setminus \lbrace 0 \rbrace} F(v-\Pi v)/\lVert \nabla v \rVert_{L^p(\Omega)}$ we have for any $u_h \in U_h$ the a posteriori error estimate
\begin{align}\label{eq:aPosteriori1}
\lVert B \fru - B u_h \rVert_{W^{-1,p'}(\Omega)} \leq \lVert \Pi \rVert\, \lVert B \fru - B u_h \rVert_{V_h^*} +  \textup{osc}(F).
\end{align}
\end{proposition}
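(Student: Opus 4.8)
The plan is to derive the a posteriori bound \eqref{eq:aPosteriori1} first and then read the quasi-optimality bound \eqref{eq:aPriori1} off from it together with the minimization property defining $\fru_h$ in \eqref{eq:discreteMinProb}. The single structural device behind both is the Fortin operator, which interpolates between the continuous norm $\lVert\,\cdot\,\rVert_{W^{-1,p}(\Omega)}$ and the discrete seminorm $\lVert\,\cdot\,\rVert_{V_h^*}$. I would first record the elementary one-sided relation $\lVert G\rVert_{V_h^*}\le\lVert G\rVert_{W^{-1,p}(\Omega)}$, valid for every $G$ because $V_h\subset W^{1,p}_0(\Omega)$, so that the supremum defining the discrete seminorm runs over a smaller set.

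For \eqref{eq:aPosteriori1} I would fix $u_h\in U_h$ and an arbitrary $v\in W^{1,p}_0(\Omega)$ and split $v=\Pi v+(v-\Pi v)$. Using $B\fru=F$ and the Fortin orthogonality $b(u_h,v-\Pi v)=0$ from \eqref{eq:Fortin}, this gives
\begin{align*}
(B\fru-Bu_h)(v)=F(v-\Pi v)+(B\fru-Bu_h)(\Pi v).
\end{align*}
The first term is at most $\textup{osc}(F)\,\lVert\nabla v\rVert_{L^p(\Omega)}$ by the definition of the oscillation, and the second is at most $\lVert B\fru-Bu_h\rVert_{V_h^*}\,\lVert\nabla\Pi v\rVert_{L^p(\Omega)}\le\lVert\Pi\rVert\,\lVert B\fru-Bu_h\rVert_{V_h^*}\,\lVert\nabla v\rVert_{L^p(\Omega)}$ by the continuity of $\Pi$. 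Dividing by $\lVert\nabla v\rVert_{L^p(\Omega)}$ and passing to the supremum over $v$ yields \eqref{eq:aPosteriori1}.

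For \eqref{eq:aPriori1} I would apply the same Fortin trick to functionals $Bw_h$ with $w_h\in U_h$: since $b(w_h,v-\Pi v)=0$, one obtains $\lVert Bw_h\rVert_{W^{-1,p}(\Omega)}\le\lVert\Pi\rVert\,\lVert Bw_h\rVert_{V_h^*}$, so the two norms are equivalent on the discrete range $BU_h$. For arbitrary $u_h\in U_h$ the triangle inequality then gives
\begin{align*}
\lVert B\fru-B\fru_h\rVert_{W^{-1,p}(\Omega)}\le\lVert B\fru-Bu_h\rVert_{W^{-1,p}(\Omega)}+\lVert\Pi\rVert\,\lVert B(u_h-\fru_h)\rVert_{V_h^*},
\end{align*}
and the minimization property enters through $\lVert B\fru_h-F\rVert_{V_h^*}\le\lVert Bu_h-F\rVert_{V_h^*}$, which together with $B\fru=F$ and the one-sided norm relation controls the middle factor by the best-approximation error. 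Passing to the infimum over $u_h$ produces an estimate of the quasi-optimal form \eqref{eq:aPriori1}.

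The main obstacle is not the structure of the argument, which is routine, but the sharpness of the constant. A direct use of the triangle inequality bounds $\lVert B(u_h-\fru_h)\rVert_{V_h^*}\le\lVert Bu_h-F\rVert_{V_h^*}+\lVert F-B\fru_h\rVert_{V_h^*}\le 2\,\lVert Bu_h-F\rVert_{V_h^*}$, which only delivers the cruder factor $1+2\lVert\Pi\rVert$; matching the stated constant $2+\lVert\Pi\rVert$ requires more careful bookkeeping, keeping the contribution measured against $\lVert\nabla v\rVert_{L^p(\Omega)}$ separate from the one measured against $\lVert\nabla\Pi v\rVert_{L^p(\Omega)}$. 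The reason the constant is delicate at all is that, in contrast to the Hilbert-space case $p=2$ — where the residual $B\fru_h-F$ is orthogonal to $BU_h$ in $V_h^*$ and a Pythagorean identity gives sharp factors — for $p\ne 2$ one has only the minimization inequality and the triangle inequality at one's disposal, so the quasi-best constant must be extracted entirely from these two ingredients.
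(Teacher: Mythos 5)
Your proof of the a posteriori estimate \eqref{eq:aPosteriori1} is exactly the paper's argument: split $v=\Pi v+(v-\Pi v)$, use $B\fru=F$ and the Fortin orthogonality in \eqref{eq:Fortin} to reduce the far term to $F(v-\Pi v)$, and bound the near term via the continuity of $\Pi$. For the a priori estimate \eqref{eq:aPriori1} you take a mild variant of the paper's route: you work with the norm equivalence $\lVert Bw_h\rVert_{W^{-1,p}(\Omega)}\leq\lVert\Pi\rVert\,\lVert Bw_h\rVert_{V_h^*}$ on the discrete range $BU_h$, a triangle inequality in $W^{-1,p}(\Omega)$, and the minimization property to control $\lVert B(u_h-\fru_h)\rVert_{V_h^*}$, whereas the paper applies \eqref{eq:aPosteriori1} at $u_h=\fru_h$, uses the minimization property together with $\lVert\cdot\rVert_{V_h^*}\leq\lVert\cdot\rVert_{W^{-1,p}(\Omega)}$ to get $\lVert B\fru-B\fru_h\rVert_{V_h^*}\leq\min_{u_h\in U_h}\lVert B\fru-Bu_h\rVert_{W^{-1,p}(\Omega)}$, and then bounds the oscillation by writing $F(v-\Pi v)=(B\fru-Bu_h)(v-\Pi v)$ for arbitrary $u_h\in U_h$ and estimating $\lVert\nabla(v-\Pi v)\rVert_{L^p(\Omega)}\leq(1+\lVert\Pi\rVert)\lVert\nabla v\rVert_{L^p(\Omega)}$, which gives $\textup{osc}(F)\leq(1+\lVert\Pi\rVert)\min_{u_h\in U_h}\lVert B\fru-Bu_h\rVert_{W^{-1,p}(\Omega)}$. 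Both routes are valid quasi-optimality arguments of the same depth.

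Concerning the constant, which you flag as your main obstacle: there is no hidden bookkeeping to find, because the paper's own proof does not deliver $2+\lVert\Pi\rVert$ either. Summing the paper's two contributions gives $\lVert\Pi\rVert+(1+\lVert\Pi\rVert)=1+2\lVert\Pi\rVert$, i.e.\ precisely your constant. (In fact, the paper's displayed oscillation estimate, read literally, bounds $\textup{osc}(F)$ by $(1+\lVert\Pi\rVert)\,\lVert B\fru-B\fru_h\rVert_{W^{-1,p}(\Omega)}$ --- the error itself rather than the best-approximation error --- which would make the combination circular; one has to replace $\fru_h$ there by a generic $u_h$ and minimize, as sketched above.) The two constants coincide only when $\lVert\Pi\rVert=1$; in the typical case $\lVert\Pi\rVert\geq 1$ the stated constant $2+\lVert\Pi\rVert$ is the smaller one and hence is not implied by either your argument or the paper's. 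Your closing diagnosis is also exactly right: for $p=2$ the residual $F-B\fru_h$ is $V_h^*$-orthogonal to $BU_h$, so a Pythagoras identity removes the factor $2$ from the triangle inequality and yields $1+\lVert\Pi\rVert$; for $p\neq 2$ only the minimization inequality is available, and $1+2\lVert\Pi\rVert$ is what it produces. So your proposal is as complete as the paper's proof; the discrepancy in the constant is an issue of the paper's statement, not a gap in your argument.
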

\begin{proof}
Let $u_h \in U_h$. Any $v \in W^{1,p}_0(\Omega)$ with $\lVert \nabla v \rVert_{L^p(\Omega)} = 1$ satisfies 
\begin{align*}
b( \fru - u_h , v) = b( \fru - u_h , \Pi v) + b( \fru - u_h ,v - \Pi v) \leq \lVert \Pi\rVert\, \lVert B\fru -B u_h \rVert_{V^*_h} + \textup{osc}(F).
\end{align*}
This proves the a posteriori estimate in \eqref{eq:aPosteriori1}. To obtain the a priori estimate in \eqref{eq:aPriori1}, we use the minimization property in \eqref{eq:discreteMinProb}, that is,
\begin{align*}
\lVert B\fru -B \fru_h \rVert_{V^*_h} = \min_{u_h \in U_h} \lVert B\fru -B u_h \rVert_{V^*_h} \leq \min_{u_h \in U_h} \lVert B\fru -B u_h \rVert_{W^{-1,p'}(\Omega)}.
\end{align*}
Moreover, the oscillation satisfies 
\begin{align*}
\textup{osc}(F) &= \sup_{v\in W^{1,p}(\Omega)\setminus \lbrace 0 \rbrace} \frac{F(v-\Pi v)}{\lVert \nabla v \rVert_{L^p(\Omega)}} = \min_{u_h\in U_h} \sup_{v\in W^{1,p}(\Omega)\setminus \lbrace 0 \rbrace} \frac{(B\fru - B \fru_h)(v-\Pi v)}{\lVert \nabla v \rVert_{L^p(\Omega)}}\\
& \leq (1 + \lVert \Pi \rVert)\, \lVert B\fru - B \fru_h \rVert_{W^{-1,p'}(\Omega)}.
\end{align*}
Combining these estimates with \eqref{eq:aPosteriori1} concludes the proof of \eqref{eq:aPriori1}.
\end{proof}
\begin{corollary}[A posteriori for exact solution]\label{cor:aposteriori}
Let $(\psi_h,\fru_h) \in V_h\times U_h$ solve \eqref{eq:Saddle1} and assume that the assumptions in \ref{itm:Ass1} and \ref{itm:Ass3} are satisfied.
Moreover, let $\sigma\in \Sigma$ denote the solution to \eqref{eq:dualProb}. Then we have the a posteriori error estimate
\begin{align}\label{eq:aposterior2}
\lVert B\fru - B\fru_h \rVert_{W^{-1,p'}(\Omega)} \eqsim \lVert \nabla \psi_h \rVert^{p-1}_{L^p(\Omega)} + \textup{osc}(F)  = \lVert \sigma \rVert_{L^{p'}(\Omega)} + \textup{osc}(F).
\end{align}
\end{corollary}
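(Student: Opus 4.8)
The plan is to read off the corollary from the two ingredients already available: the a priori/a posteriori machinery of Proposition~\ref{prop:errorControl} and the duality mapping of Lemma~\ref{lem:dualityMapping}, with the final rewriting in terms of $\sigma$ supplied by Lemma~\ref{lem:Duality}. The central observation, which I would isolate first, is that the residual $F-B\fru_h$ measured in the discrete seminorm is exactly $\lVert \nabla \psi_h\rVert_{L^p(\Omega)}^{p-1}$. Indeed, Assumption~\ref{itm:Ass1} gives $F=B\fru$, so $B\fru-B\fru_h=F-B\fru_h$ on $V_h$, and the first line of the saddle point system \eqref{eq:Saddle1} reads $(F-B\fru_h)(v_h)=\int_\Omega |\nabla \psi_h|^{p-2}\nabla \psi_h\cdot\nabla v_h\dx$ for all $v_h\in V_h$. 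This is precisely the equation characterizing $R(F-B\fru_h)=\psi_h$ in Lemma~\ref{lem:dualityMapping}\ref{itm:sada}, so the computation in its proof yields $\lVert B\fru-B\fru_h\rVert_{V_h^*}=\lVert \nabla \psi_h\rVert_{L^p(\Omega)}^{p-1}$.

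For the upper bound in \eqref{eq:aposterior2} I would apply the a posteriori estimate \eqref{eq:aPosteriori1} with $u_h=\fru_h$ and insert this identity, obtaining $\lVert B\fru-B\fru_h\rVert_{W^{-1,p}(\Omega)}\le \lVert \Pi\rVert\,\lVert \nabla \psi_h\rVert_{L^p(\Omega)}^{p-1}+\textup{osc}(F)$. For the matching lower bound I would combine two facts. First, since $V_h\subset W^{1,p}_0(\Omega)$ the $V_h^*$-seminorm is dominated by the full $W^{-1,p}$-norm, so $\lVert \nabla \psi_h\rVert_{L^p(\Omega)}^{p-1}=\lVert B\fru-B\fru_h\rVert_{V_h^*}\le \lVert B\fru-B\fru_h\rVert_{W^{-1,p}(\Omega)}$. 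Second, the oscillation bound $\textup{osc}(F)\le(1+\lVert \Pi\rVert)\,\lVert B\fru-B\fru_h\rVert_{W^{-1,p}(\Omega)}$ is already contained in the proof of Proposition~\ref{prop:errorControl}: the Fortin property gives $Bu_h(v-\Pi v)=0$, hence $F(v-\Pi v)=(B\fru-B\fru_h)(v-\Pi v)$, and $\lVert \nabla(v-\Pi v)\rVert_{L^p(\Omega)}\le(1+\lVert\Pi\rVert)\lVert \nabla v\rVert_{L^p(\Omega)}$. Adding the two gives $\lVert \nabla \psi_h\rVert_{L^p(\Omega)}^{p-1}+\textup{osc}(F)\lesssim \lVert B\fru-B\fru_h\rVert_{W^{-1,p}(\Omega)}$, which together with the upper bound establishes the equivalence $\eqsim$ with constants depending only on $\lVert\Pi\rVert$.

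It remains to verify the exact equality $\lVert \nabla \psi_h\rVert_{L^p(\Omega)}^{p-1}=\lVert \sigma\rVert_{L^{p'}(\Omega)}$. By Lemma~\ref{lem:exAndEqui}\ref{itm:equi} the function $\psi_h$ from \eqref{eq:Saddle1} is the minimizer of \eqref{eq:discreteMinProb2}, so the $\sigma$ of \eqref{eq:dualProb} is related to it through Lemma~\ref{lem:Duality}: from $\sigma=|\nabla \psi_h|^{p-2}\nabla \psi_h$ we get $|\sigma|=|\nabla \psi_h|^{p-1}$, whence $|\sigma|^{p'}=|\nabla \psi_h|^{(p-1)p'}=|\nabla \psi_h|^{p}$ since $(p-1)p'=p$; integrating and taking the $1/p'$ power gives $\lVert \sigma\rVert_{L^{p'}(\Omega)}=\lVert \nabla \psi_h\rVert_{L^p(\Omega)}^{p/p'}=\lVert \nabla \psi_h\rVert_{L^p(\Omega)}^{p-1}$. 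I expect the only genuinely delicate point to be the identification of $\psi_h$ with the duality image $R(F-B\fru_h)$; once that is in place the corollary is a direct assembly of results proved earlier, and the remaining manipulations are routine bookkeeping with the conjugate exponents $p$ and $p'$.
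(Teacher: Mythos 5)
Your proposal is correct and follows essentially the same route as the paper: the identity $\lVert B\fru - B\fru_h\rVert_{V_h^*} = \lVert \nabla\psi_h\rVert_{L^p(\Omega)}^{p-1} = \lVert\sigma\rVert_{L^{p'}(\Omega)}$ via the saddle point equation and \eqref{eq:Ident}, the upper bound from \eqref{eq:aPosteriori1}, and the reverse bound from the oscillation estimate together with the domination of the $V_h^*$-seminorm by the $W^{-1,p}(\Omega)$-norm. Your explicit identification $\psi_h = R(F-B\fru_h)$ through Lemma~\ref{lem:dualityMapping} is just a repackaging of the paper's H\"older argument, so the two proofs coincide in substance.
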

\begin{proof}
H\"older's inequality, the first equation in \eqref{eq:Saddle1}, and the identity in \eqref{eq:Ident} show
\begin{align*}
\lVert B\fru - B\fru_h \rVert_{V_h^*} = \sup_{v_h \in V_h\setminus \lbrace 0 \rbrace} \frac{\int_\Omega |\nabla \psi_h|^{p-2}\nabla \psi_h \cdot \nabla v_h \dx }{\lVert \nabla v_h \rVert_{L^p(\Omega)}} = \lVert \nabla \psi_h \rVert^{p-1}_{L^p(\Omega)} = \lVert \sigma \rVert_{L^{p'}(\Omega)}.
\end{align*}
Using this identity in the a posteriori estimate in \eqref{eq:aPosteriori1} leads to the upper bound in \eqref{eq:aposterior2}.
Equivalence follows from the upper bound for the oscillation
\begin{align*}
\textup{osc}(F)& = \sup_{v \in W^{1,p}_0(\Omega)} \frac{F(v-\Pi v)}{\lVert \nabla v \rVert_{L^{p}(\Omega)}} = \sup_{v \in W^{1,p}_0(\Omega)} \frac{b(\fru - \fru_h,v-\Pi v)}{\lVert \nabla v \rVert_{L^{p}(\Omega)}}\\
& \leq (1 + \lVert \Pi \rVert)\, \lVert B\fru - B \fru_h \rVert_{W^{-1,p'}(\Omega)}.\qedhere
\end{align*}
\end{proof}
We conclude this section with a discussion of the following additional assumption:
\begin{enumerate}\setcounter{enumi}{2}
\item The operator $B\colon W^{1,p'}_0(\Omega) \to W^{-1,p'}(\Omega)$ is bounded from above and below in the sense that\label{itm:Ass2} 
\begin{align*}
\lVert \nabla u \rVert_{L^{p'}(\Omega)} \eqsim \lVert Bu\rVert_{W^{-1,p'}(\Omega)}\qquad\text{for all }u \in W^{1,p'}_0(\Omega). 
\end{align*}
\end{enumerate}
Under this additional assumption the error estimates in Proposition~\ref{prop:errorControl} and Corollary~\ref{cor:aposteriori} allow for any estimate of the more natural error quantity $\lVert \nabla \fru - \nabla \fru_h \rVert_{L^{p'}(\Omega)}$ due to the equivalence 
\begin{align*}
\lVert \nabla \fru - \nabla \fru_h \rVert_{L^{p'}(\Omega)} \eqsim \lVert B \fru  - B \fru_h \rVert_{W^{-1,p'}(\Omega)}.
\end{align*}
The assumption in \ref{itm:Ass1} seems to be natural. The assumption in \ref{itm:Ass3} can in many situations be achieved by choosing sufficiently large polynomial degrees $k+\delta$ for the test space $V_h = \mathcal{L}^1_{k+\delta}(\tria)$ as for example investigated in \cite[Sec.~4]{MonsuurStevensonStorn23}. 
The assumption in \ref{itm:Ass2} has been investigated in \cite{HoustonMugaRoggendorfvanderZee19} but seems to be rather restrictive.
Indeed, there exist counterexamples for the Laplace problem $Bu = \int_\Omega \nabla  u \cdot \nabla \bigcdot \dx$ for exponents $p>4$ and non-smooth non-convex domains $\Omega$ as shown in \cite{JerisonKenig95}. 
Notice that even in cases where \ref{itm:Ass2} is satisfied, the Galerkin scheme investigated in \cite{HoustonMugaRoggendorfvanderZee19} requires stability of the $W^{1,2}_0(\Omega)$-projection in $W^{1,p}_0(\Omega)$. Such stability results are known for uniform and mildly graded meshes \cite{DemlowLeykekhmanSchatzWahlbin12,DieningRolfesSalgado23}, but are an open problem for adaptively refined meshes. Our minimal residual method circumvents this problem by suitable designs of Fortin operators in \ref{itm:Ass3}. 
\section{Adaptive Scheme}\label{sec:AdaptiveScheme}
As pointed out in Corollary~\ref{cor:aposteriori}, the minimizer $\sigma$ with \eqref{eq:dualProb} allows us to drive an adaptive mesh refinement scheme. However, our iterative scheme does not compute the exact solution $\sigma$. 
We thus introduce an adaptive scheme that additionally takes the distance of the current iterate $\sigma_{n}$ to $\sigma$ into account.
The error indicator that indicate errors caused by
\begin{enumerate}
\item the upper interval bound $\zeta_+$ reads
$\eta_{\zeta_+}^2(\sigma_{n}) \coloneqq \mathcal{J}^*_{\zeta}(\sigma_{n}) - \mathcal{J}^*_{[\zeta_-,\infty)}(\sigma_{n})$,\label{item:1}
\item the lower interval bound $\zeta_-$ reads
$
\eta_{\zeta_-}^2(\sigma_{n}) \coloneqq \mathcal{J}^*_{\zeta}(\sigma_{n}) - \mathcal{J}^*_{[0,\zeta_+]}(\sigma_{n})$,\label{item:2}
\item the error due to the fixed-point iteration reads \label{item:3}
\begin{align*}
\eta^2_{\textup{Ka\v{c}},\zeta}(\sigma_{n}) \coloneqq \left(\frac{\zeta_+}{\zeta_-}\right)^{2-p'}  \big( \mathcal{J}^*_\zeta)(\sigma_n) - \mathcal{J}^*_\zeta(\sigma_{n+1})\big),
\end{align*}
\item the error due to the discretization reads\label{item:4}
\begin{align*}
\eta_h^{p'} \coloneqq \sum_{T\in \tria} \eta^{p'}_h(T)\quad \text{with}\quad \eta^{p'}_h(T) \coloneqq \lVert \sigma_{n} \rVert_{L^{p'}(T)}^{p'}.
\end{align*}
\end{enumerate}
The indicators in \ref{item:1} and \ref{item:2} provide some information on the impact of the relaxation interval $\zeta$ on the current iterate.
The indicator in \ref{item:3} is motivated by the convergence result in Proposition~\ref{prop:linConvergence}.
The error indicator in \ref{item:4} is motivated by the a posteriori error estimate for $\sigma$ in Corollary~\ref{cor:aposteriori}.
Notice that $\sigma_n$ is indeed a good approximation of $\sigma$ if $\lVert \sigma - \sigma_n \rVert_{L^{p'}(\Omega)} \ll \lVert \sigma \rVert_{L^{p'}(\Omega)}$, which can be seen by the triangle inequality 
\begin{align*}
|\lVert \sigma_n \rVert_{L^{p'}(\Omega)} - \lVert \sigma - \sigma_n \rVert_{L^{p'}(\Omega)} | \leq
\lVert \sigma \rVert_{L^{p'}(\Omega)} \leq \lVert \sigma_n \rVert_{L^{p'}(\Omega)} + \lVert \sigma - \sigma_n \rVert_{L^{p'}(\Omega)}.
\end{align*}
Lemma~\ref{lem:NotionOfDistance} states that
\begin{align*}
\lVert \sigma - \sigma_n \rVert^2_{L^{p'}(\Omega)} \lesssim \lVert |\sigma_n| + |\sigma - \sigma_n|\rVert_{L^{p'}(\Omega)}^{2-p'} \big(\mathcal{J}(\sigma_n) - \mathcal{J}(\sigma) \big).
\end{align*}
Hence, the estimate $\lVert \sigma - \sigma_n \rVert_{L^{p'}(\Omega)} \ll \lVert \sigma \rVert_{L^{p'}(\Omega)}$ follows from an estimate like
\begin{align*}
\mathcal{J}(\sigma_n) - \mathcal{J}(\sigma) \ll \lVert |\sigma_n| + |\sigma - \sigma_n|\rVert_{L^{p'}(\Omega)}^{p'-2}  \lVert \sigma_n\rVert_{L^{p'}(\Omega)}^2 \leq \lVert \sigma_n\rVert_{L^{p'}(\Omega)}^{p'} = \eta_h^{p'}.
\end{align*}
This motivates the following refinement strategy with some small weight $w>0$:
\begin{enumerate}
\item If $\eta_{\zeta_+}^2(\sigma_{n})  + \eta_{\zeta_-}^2(\sigma_{n})  + \eta^2_{\textup{Ka\v{c}},\zeta}(\sigma_{n}) \leq w \, \eta_h^{p'}$, refine the mesh adaptively with the local error contributions $\eta^{p'}_h(T)$ as refinement indicator.
\item Otherwise, if $\max \lbrace \eta_{\zeta_-}^2(\sigma_{n}),\eta^2_{\textup{Ka\v{c}},\zeta}(\sigma_{n})\rbrace \leq \eta_{\zeta_+}^2(\sigma_{n})$, increase $\zeta_+$.
\item Otherwise, if $\max \lbrace \eta_{\zeta_+}^2(\sigma_{n}),\eta^2_{\textup{Ka\v{c}},\zeta}(\sigma_{n})\rbrace\leq \eta_{\zeta_-}^2(\sigma_{n})$, decrease $\zeta_-$.
\end{enumerate}
Then we perform another \Kacanov{} iteration and continue with the evaluation of the resulting error indicators. This leads to an adaptive loop.
\begin{remark}[Primal-dual error estimator]
In \cite[Sec.~6.2]{BalciDieningStorn22} we use the dual problem with energy $\mathcal{J}_\zeta$ of the minimization problem in \eqref{eq:dualProb} to define the estimator 
\begin{align}\label{eq:Bds}
\eta^2_{\textup{Ka\v{c}},\zeta,\textup{Dual}}(\sigma_{n}) \coloneqq \mathcal{J}_\zeta(\psi_{h,n}) + \mathcal{J}^*_\zeta(\sigma_{n}).
\end{align}
This error estimator is a guaranteed upper bound for the error 
\begin{align*}
\mathcal{J}^*_\zeta(\sigma_n) - \mathcal{J}^*_\zeta(\sigma_\zeta) \leq \eta^2_{\textup{Ka\v{c}},\zeta,\textup{Dual}}(\sigma_{n}).
\end{align*}
However, in \cite{BalciDieningStorn22} we focused on a lowest-order scheme in the sense that $V_h = \mathcal{L}^1_{1,0}(\Omega)$, which allows for accurate evaluations of $\sigma_{n+1} = (\zeta_-\vee |\sigma_n|\wedge \zeta_+)^{2-p'} \nabla \psi_{h,n+1}$ with $\sigma_n\in \mathcal{L}^0_0(\tria;\mathbb{R}^d)$. Since in this paper's minimal residual method the space $V_h$ is of higher polynomial degree, the evaluation of $\sigma_{n+1}$ becomes more intricate. Our numerical experiments indicate that this challenge does not impact the convergence of the \Kacanov{} scheme, but it causes difficulties when evaluating the duality gap in~\eqref{eq:Bds}. We thus use the alternative indicator in \ref{item:3}. It is possible to circumvent these issues by replacing the test space $V_h$ with of higher polynomial degree by a test space $V_h =\mathcal{L}^1_0(\tria^+)$ with finer mesh $\tria^+ \geq \tria$.
\end{remark}
\begin{remark}[Cheaper approaches]
The adaptive loop suggested in this section is much more costly than the adaptive scheme with linear minimal residual methods for $p=2$. This might be a price we have to pay the solve challenging PDE's. On the other hand, for less challenging problems, we might use cheaper versions of the suggested scheme. For example, the scheme performed well in our experiments with fixed relaxation interval and a fixed small number of \Kacanov{} iterations after each mesh refinement as for example done in Section~\ref{sec:ExpErikJohansons}. Alternatively, one might use our scheme only in the last step of an adaptive finite element loop to smoothen oscillations.
\end{remark}
\section{Applications}\label{sec:Applications}
We conclude this paper with an application of our algorithm to convection-diffusion problems. Given a bounded Lipschitz domain $\Omega \subset \mathbb{R}^d$, a diffusion coefficient $\varepsilon > 0$, an incompressible advection field $\beta \in L^\infty(\Omega;\mathbb{R}^d)$, a function $c \in L^\infty(\Omega)$, and a right-hand side $f \in L^2(\Omega)$, this problems seeks $u\in W^{1,2}_0(\Omega)$ with 
\begin{align}\label{eq:convDiff}
- \textup{div}(\varepsilon \nabla u - \beta u ) + cu = f.
\end{align}
Set for all $u\in W^{1,1}_0(\Omega)$ and $v\in W^{1,1}_0(\Omega)$ the functional $F(v) \coloneqq \int_\Omega fv\dx$ and the bilinear form 
\begin{align*}
b(u,v) \coloneqq \int_\Omega \varepsilon \nabla u \cdot \nabla v \dx - \int_\Omega u\beta \cdot \nabla v\dx + \int_\Omega c uv\dx.
\end{align*} 
The variational formulation of~\eqref{eq:convDiff} seeks the solution $u\in W^{1,2}_0(\Omega)$ to
\begin{align}\label{eq:connvectionDiffusion}
b(u,v)  = F(v) \qquad\text{for all }v\in W^{1,2}_0(\Omega).
\end{align}
This formulation allows for the application of our minimal residual method. We therefore discretize the spaces $W_0^{1,p'}(\Omega)$ and $W_0^{1,p}(\Omega)$ with $p \coloneqq 100$ by   
\begin{align*}
U_h \coloneqq \mathcal{L}^1_{1,0}(\tria)\qquad\text{and}\qquad V_h \coloneqq \mathcal{L}^1_{2,0}(\tria).
\end{align*}
Suitable Fortin operators \eqref{eq:Fortin}, which might in fact require higher polynomial degrees in $V_h$, are discussed in \cite[Sec.~4]{MonsuurStevensonStorn23}.
To compare the results, we apply the following alternative schemes:
\begin{enumerate}
\item Our first alternative numerical scheme is the classical Galerkin FEM. It seeks the solution $\fru^G_h\in U_h$ to the problem \label{itm:Galerkin}
\begin{align*}
b(\fru^\textup{G}_h, w_h) = F(w_h) \qquad\text{for all }w_h\in U_h. 
\end{align*}
Adaptive mesh refinements are driven by the standard residual error estimator investigated for example in \cite[Sec.~1.2]{Verfurth1996}.
\item The second alternative is the classical first-order system least squares method \cite{BochevGunzburger09} with Raviart-Thomas space $RT_0(\tria) \coloneqq \lbrace q \in H(\textup{div},\Omega)\colon$ for all $T\in \tria$ exist $A\in \mathbb{R}^d$ and $b\in \mathbb{R}$ with $q(x)|_T = A + b x$ for all $x\in T\rbrace$. It seeks the minimizer $(\fru_h^\textup{LS},\sigma_h^\textup{LS}) \in U_h\times RT_0(\tria)$ that minimizes over all $(u_h,\tau_h) \in U_h\times RT_0(\tria)$ the functional  \label{itm:LS}
\begin{align*}
\lVert \tau_h - \varepsilon \nabla u_h + \beta u_h\rVert_{L^2(\Omega)}^2 + \lVert \textup{div}\, \tau_h - cu_h + f\rVert_{L^2(\Omega)}^2.
\end{align*}
Adaptive mesh refinements are driven by the local contributions
\begin{align*}
\lVert \sigma_h^\textup{LS} - \varepsilon \nabla \fru_h^\textup{LS} + \beta \fru_h^\textup{LS}\rVert_{L^2(T)}^2 + \lVert \textup{div}\, \sigma_h^\textup{LS} - c\fru_h^\textup{LS} + f\rVert_{L^2(T)}^2\quad\text{for all }T\in \tria.
\end{align*}
\item The third alternative is the minimal residual method introduced in \cite[Example~2.2~(i)]{MonsuurStevensonStorn23}, which seeks the solution to the minimization problem \label{itm:MinRes}
\begin{align*}
\fru^\textup{Min}_h = \argmin_{u_h \in U_h} \sup_{v_h\in V_h\setminus \lbrace 0 \rbrace} \frac{b(u_h,v_h) - F(v_h) }{\lVert \nabla v_h \rVert_{L^2(\Omega)}}.
\end{align*}
Adaptive mesh refinements are driven by the local contributions $\lVert \eta_h \rVert_{L^2(T)}^2$ for all $T\in \tria$ of the Riesz representative $\eta_h \in V_h$ with 
\begin{align*}
\int_\Omega \nabla \eta_h\cdot\nabla v_h \dx = b(\fru_h,v_h) - F(v_h) \qquad\text{for all }v_h \in V_h.
\end{align*}
\end{enumerate}
The adaptive schemes use the D\"orfler marking strategy with bulk parameter $0.5$. 
\subsection{Experiment 1 (Viscosity solution 1D)}
\begin{figure}[h]
\begin{tikzpicture}
\begin{axis}[
	ymax = 0.8,    
	cycle multi list={\nextlist MyColorsPlotU},
	legend cell align=left,
	legend style={legend columns=3,legend pos=south east,font=\fontsize{6}{4}\selectfont},
    width = \textwidth,
    height = 0.4\textwidth,
    xlabel = {$x$},]
 
\addplot table [x=x,y=y]{Plots/Exp1_GalerkinNdof31.txt};
\addplot table [x=x,y=y]{Plots/Exp1_LeastSquareNdof31.txt};
\addplot table [x=x,y=y]{Plots/Exp1_MinResL2Ndof31.txt};
\addplot table [x=x,y=y]{Plots/Exp1_p100Ndof31.txt};
\addplot table [x=x,y=y]{Plots/Exp1_InterpolNdof31.txt};
\legend{{$\fru_h^G$},{$\fru_h^\textup{LS}$},{$\fru_h^\textup{Min}$},{$\fru_h$},{$\fru$}};
\end{axis}
\end{tikzpicture}
\caption{Approximations of the viscosity solution $\fru$ to \eqref{eq:viscos1d}.}\label{fig:viscosity1D}
\end{figure}
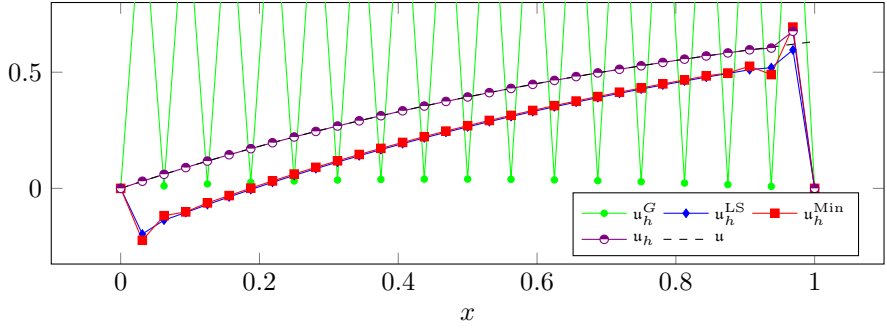%
Our first experiment considers the one dimensional problem 
\begin{align}\label{eq:viscos1d}
\fru' + \fru = 1\text{ in }\Omega \coloneqq (0,1)\qquad\text{with}\qquad \fru(0) = \fru(1) = 0.
\end{align}
This overdetermined ODE has no classical solution, but can be seen as the limiting case $\varepsilon \to 0$ of the problem 
\begin{align*}
-\varepsilon \fru_\varepsilon'' + \fru_\varepsilon' + \fru_\varepsilon = 1\text{ in }\Omega \qquad\text{with}\qquad \fru_\varepsilon(0) = \fru_\varepsilon(1) = 0.
\end{align*}
These functions $\fru_\varepsilon$ converge towards the viscosity solution $\fru(x) = 1 - \exp(-x)$, cf.~\cite[Chap.~1]{Katzourakis15} and \cite[Sec.~4.6]{Guermond04}. Figure~\ref{fig:viscosity1D} displays the resulting approximations on a partition of the unit interval into $2^5$ equidistant intervals. The solution $\fru_h$ to \eqref{eq:discreteMinProb} yields, apart from a tiny oscillation on the last intervals, a very accurate approximation of the viscosity solution. In contrast, the Galerkin FEM results in a highly oscillating function that does not resemble any of the solutions characteristics at all. The solution to the minimization methods in \ref{itm:LS} and \ref{itm:MinRes} experiences some fast decay on the first interval. Thereafter, the approximation increase and experience a similar (but stronger) oscillation at the last two intervals. The accuracy of the solutions to  \ref{itm:Galerkin}--\ref{itm:MinRes} does not improve under uniform mesh refinement. Adaptive mesh refinements, driven by the local residuals of these methods, overcome this problem partially for the methods in \ref{itm:LS}--\ref{itm:MinRes}. Apart from an oscillation near $x=1$, the adaptive LSFEM shows some small oscillation near the origin $x=0$ and the adaptive minimal residual method \ref{itm:MinRes} shows an oscillation near $x=1/2$. This indicates severe difficulties of the methods in \ref{itm:Galerkin}--\ref{itm:MinRes} for problems with small viscosity parameter $\varepsilon \ll 1$, that can be overcome by the minimal residual method~\eqref{eq:discreteMinProb} in this paper.
\subsection{Experiment 2 (Viscosity solution 2D)}
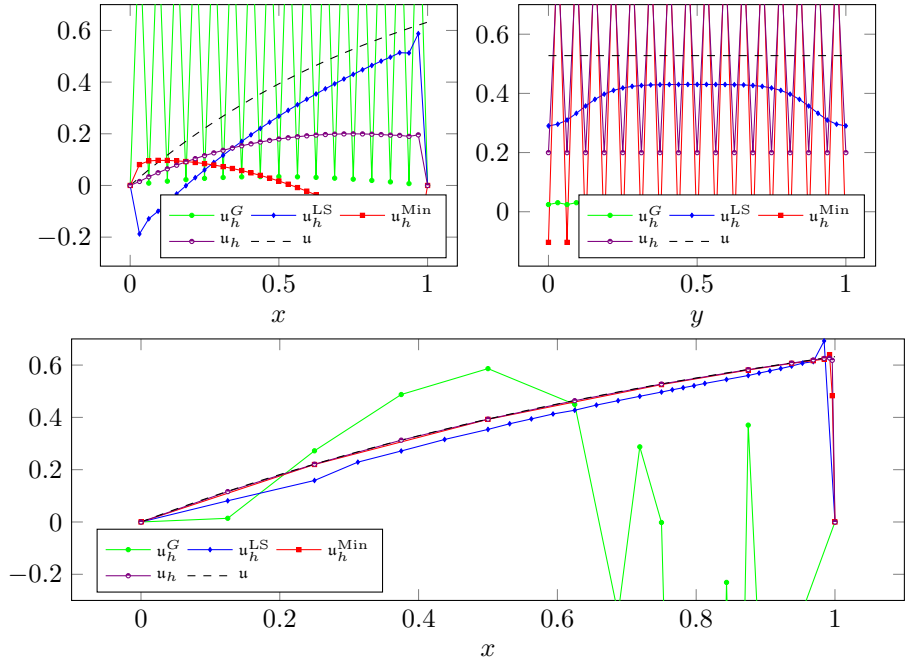
\begin{figure}[h!]
\begin{tikzpicture}
\begin{axis}[
	ymax = 0.7,    
	cycle multi list={\nextlist MyColorsPlotU2},
	legend cell align=left,
	legend style={legend columns=3,legend pos=south east,font=\fontsize{6}{4}\selectfont},
    width = .5\textwidth,
    height = 0.4\textwidth,
    xlabel = {$x$\vphantom{$y$}},]
 
\addplot table [x=x,y=y]{Plots/Exp2_GalerkinNdof1000_unif.txt};
\addplot table [x=x,y=y]{Plots/Exp2_LeastSquareNdof1000_unif.txt};
\addplot table [x=x,y=y]{Plots/Exp2_MinResL2Ndof1000_unif.txt};
\addplot table [x=x,y=y]{Plots/Exp2_p100Ndof1000_unif.txt};
\addplot table [x=x,y=y]{Plots/Exp2_InterpolNdof1000_unif.txt};
\legend{{$\fru_h^G$},{$\fru_h^\textup{LS}$},{$\fru_h^\textup{Min}$},{$\fru_h$},{$\fru$}};
\end{axis}
\end{tikzpicture}
\begin{tikzpicture}
\begin{axis}[
	ymax = 0.7,    
	cycle multi list={\nextlist MyColorsPlotU2},
	legend cell align=left,
	legend style={legend columns=3,legend pos=south east,font=\fontsize{6}{4}\selectfont},
    width = .5\textwidth,
    height = 0.4\textwidth,
    xlabel = {$y$},]
 
\addplot table [x=x,y=y]{Plots/Exp2_GalerkinNdof1000_unif_y.txt};
\addplot table [x=x,y=y]{Plots/Exp2_LeastSquareNdof1000_unif_y.txt};
\addplot table [x=x,y=y]{Plots/Exp2_MinResL2Ndof1000_unif_y.txt};
\addplot table [x=x,y=y]{Plots/Exp2_p100Ndof1000_unif_y.txt};
\addplot table [x=x,y=y]{Plots/Exp2_InterpolNdof1000_unif_y.txt};
\legend{{$\fru_h^G$},{$\fru_h^\textup{LS}$},{$\fru_h^\textup{Min}$},{$\fru_h$},{$\fru$}};
\end{axis}
\end{tikzpicture}
\begin{tikzpicture}
\begin{axis}[
	ymax = 0.7,    
	ymin = -.3,
	cycle multi list={\nextlist MyColorsPlotU2},
	legend cell align=left,
	legend style={legend columns=3,legend pos=south west,font=\fontsize{6}{4}\selectfont},
    width = \textwidth,
    height = 0.4\textwidth,
    xlabel = {$x$},]
 
\addplot table [x=x,y=y]{Plots/Exp2_GalerkinNdof1000_adapt.txt};
\addplot table [x=x,y=y]{Plots/Exp2_LeastSquareNdof1000_adapt.txt};
\addplot table [x=x,y=y]{Plots/Exp2_MinResL2Ndof1000_adapt.txt};
\addplot table [x=x,y=y]{Plots/Exp2_p100Ndof1000_adapt.txt};
\addplot table [x=x,y=y]{Plots/Exp2_InterpolNdof1000_adapt.txt};
\legend{{$\fru_h^G$},{$\fru_h^\textup{LS}$},{$\fru_h^\textup{Min}$},{$\fru_h$},{$\fru$}};
\end{axis}
\end{tikzpicture}
\caption{Approximations with about 1000 degrees of freedom of the viscosity solution $\fru$ with \eqref{eq:viscos2d} evaluated at $(x,1/2)$ (top left) and $(3/4,y)$ (top right) with uniform mesh refinements and at $(x,1/2)$ (bottom) with adaptive mesh refinement.}\label{fig:viscosity2D}
\end{figure}%
In our second experiment we extend the first experiment to two dimensions: We seek the viscosity solution to
\begin{align}\label{eq:viscos2d}
\frac{d}{dx} \fru + \fru = 1\text{ in }\Omega \coloneqq (0,1)^2\qquad\text{with }\fru(0,\bigcdot) = \fru(1,\bigcdot) = 0.
\end{align}
The viscosity solution reads $\fru(x,y) = 1- \exp(-x)$. 
The resulting approximations are displayed in Figure~\ref{fig:viscosity2D}. All methods fail on uniform meshes: While the Galerkin FEM leads to strong oscillations along the $x$-axis, the minimal residual method in \ref{itm:MinRes} and our suggested methods in \eqref{eq:discreteMinProb} lead to strong oscillations along the $y$-axis.
The LSFEM seems to be more robust, but does not provide a good approximation as well. Uniform mesh refinements do not seem to overcome these difficulties.
However, adaptive mesh refinement overcomes this problem for the minimal residual methods. The adaptive LSFEM solution seems to converge to the exact solution but is still much worse than the adaptively computed solutions to the method in \ref{itm:MinRes} and our approach in \eqref{eq:discreteMinProb}. Indeed, the solution to \ref{itm:MinRes} shows only some tiny oscillation near $x=1$, the solution to our scheme in \eqref{eq:discreteMinProb} does not show any oscillation at all and provides a very accurate approximation, cf.~Figure~\ref{fig:viscosity2D}. This shows that adaptivity might be a key in the convergence of our approximation. 
\subsection{Experiment 3 (Eriksson and Johnson)}\label{sec:ExpErikJohansons}
Our third experiment has been introduced by Eriksson and Johnson in \cite{ErikssonJohnson93}. We seek the solution to \eqref{eq:connvectionDiffusion} with $\beta = (1,0)^\top$, right-hand side $f=0$, and initial data $\fru(0,y) = \sin(\pi y)$ and $\fru(x,y)=0$ for $x=1$ or $y\in \{0,1\}$ with unit square domain $\Omega = (0,1)^2$. In other words, we seek the solution to
\begin{align}\label{eq:ErikssonJohnson}
\begin{aligned}
-\varepsilon \Delta \fru + \frac{d}{d x}\fru & = 0&&\text{in }\Omega,\\
\fru(x,y) & = 0&&\text{if }  y\in {0,1}\text{ or }x = 1,\\
\fru(x,y) &  = \sin(\pi y)&&\text{if } x = 0.
\end{aligned}
\end{align}
Let $s_1 \coloneqq (1 + \sqrt{1+4\pi^2\varepsilon^2})(2\varepsilon)^{-1}$ and $s_2 \coloneqq (1 - \sqrt{1+4\pi^2\varepsilon^2})(2\varepsilon)^{-1}$. The exact solution reads 
\begin{align*}
\fru(x,y) = \frac{\exp(s_1(x-1)) - \exp(s_2(x-1))}{\exp(-s_1)-\exp(-s_2)}\sin(\pi y).
\end{align*}
In our first computation we set $\varepsilon \coloneqq 10^{-3}$ and use uniformly refined meshes. In contrast to the previous calculations all methods converge as the mesh is uniformly refined. On coarse grids our minimization schemes leads to superior results as depicted in Figure~\ref{fig:ErikssonJohanson1}. 
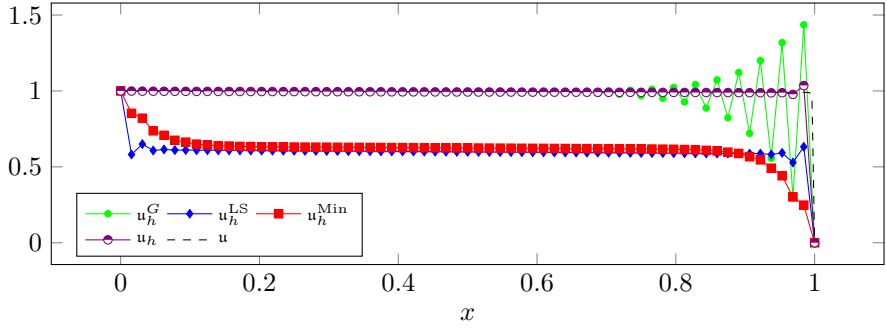
\begin{figure}
\begin{tikzpicture}
\begin{axis}[
	cycle multi list={\nextlist MyColorsPlotU},
	legend cell align=left,
	legend style={legend columns=3,legend pos=south west,font=\fontsize{6}{4}\selectfont},
    width = \textwidth,
    height = 0.4\textwidth,
    xlabel = {$x$},]
 
\addplot table [x=x,y=y]{Plots/Exp3/Exp3_GalerkinNdof9000_unif.txt};
\addplot table [x=x,y=y]{Plots/Exp3/Exp3_LSFEMNdof9000_unif.txt};
\addplot table [x=x,y=y]{Plots/Exp3/Exp3_HMinNdof9000_unif.txt};
\addplot table [x=x,y=y]{Plots/Exp3/Exp3_KacanovNdof9000_unif_w01.txt};
\addplot table [x=x,y=y]{Plots/Exp3/Exp3_InterpolNdof9000_unif.txt};
\legend{{$\fru_h^G$},{$\fru_h^\textup{LS}$},{$\fru_h^\textup{Min}$},{$\fru_h$},{$\fru$}};
\end{axis}
\end{tikzpicture}
\caption{Approximations of the solution to \eqref{eq:ErikssonJohnson} with $\varepsilon = 10^{-3}$, uniform mesh, and $\dim U_h = 4225$ evaluated at $(x,1/2)$.}\label{fig:ErikssonJohanson1}
\end{figure}%

The situation changes drastically when we solve the problem with very small diffusion coefficient $\varepsilon \coloneqq 10^{-6}$. 
For uniform mesh refinements the solutions to the minimal residual method in \eqref{eq:discreteMinProb} and the minimal residual method in \ref{itm:MinRes} show as in Experiment 2 strong oscillations along the $y$-axis.
The direct solver in FEniCS (MUMPS) was not able to solve the resulting system for the Galerkin FEM solution with more than 80 degrees of freedom. The LSFEM solution seems to converge towards a function $u \approx \gamma \, \sin(\pi y)$ with some constant $\gamma$ slight larger than $1/2$ as the mesh is uniformly refined.
Unfortunately, adaptivity does not overcome this problem: 
The direct solver in FEniCS (MUMPS) was not able to compute a solution to the Galerkin FEM and the LSFEM with adaptive mesh refinements for meshes with more than about 300 and 1000 degrees of freedom, respectively.  The adaptive scheme for our method in \eqref{eq:discreteMinProb} refines strongly near $x=0$ and the approximation seems to converge point-wise to zero, cf.~Figure~\ref{fig:ErikssonJohanson2}. The adaptive minimal residual method in \ref{itm:MinRes} refines strongly near $x=0$ and $x=1$ and the approximations look roughly like $u \approx \gamma \, \sin(\pi y)$ with some constant $\gamma$ slightly larger than $1/2$. All in all, non of the schemes converges towards the exact solution. 
We overcome this challenge by slowly adapting the diffusion parameter $\varepsilon$ in our computations in the sense that we set 
\begin{align}\label{eq:adaptedDiffusion}
\varepsilon \coloneqq \begin{cases}
10^{-2}&\text{it }\dim U_h \in [0,1000),\\
10^{-3}&\text{if }\dim U_h \in [1000,5000),\\
10^{-4}&\text{if } \dim U_h \in [5000,10 000),\\
10^{-5} & \text{if }\dim U_h \in [10 000,50 000),\\
10^{-6} & \text{else}.
\end{cases}
\end{align}
Figure~\ref{fig:ErikssonJohanson2} shows the resulting convergence history plot of the error measured in the $L^2(\Omega)$ norm.
Initially adapting $\varepsilon$ helps all methods, but as $\dim U_h$ exceeds $10^3$ (and so $\varepsilon$ is set to $10^{-3}$), the LSFEM starts to struggle.
The same happens for the Galerkin and minimal residual method in \ref{itm:Galerkin} and \ref{itm:MinRes} as $\dim U_h$ exceed $10^4$ (and so $\varepsilon$ is set to $10^{-5}$). In contrast, our method in \eqref{eq:discreteMinProb} still converges as the number of degrees of freedom is increased. In order to save computational power, we did not use the adaptive scheme suggested in Section~\ref{sec:AdaptiveScheme}. Instead, we fixed the relaxation interval $\zeta \coloneqq [10^{-2},10^2]$ and computed only two \Kacanov{} iterations on each mesh. An alternative calculation, using the adaptive strategy in Section~\ref{sec:AdaptiveScheme} with the large weight $w = 100$ causing about five \Kacanov{} iterations on each mesh, led to similar convergence results.
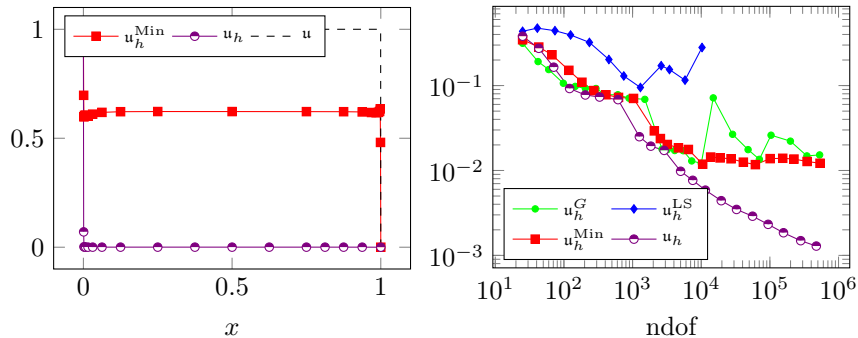
\begin{figure}
\begin{tikzpicture}
\begin{axis}[
	cycle multi list={\nextlist MyColorsPlotU3},
	legend cell align=left,
	legend style={legend columns=3,legend pos=north west,font=\fontsize{6}{4}\selectfont},
    width = .5\textwidth,
    height = 0.4\textwidth,
    xlabel = {$x$\vphantom{$10^3$}},]
 
\addplot table [x=x,y=y]{Plots/Exp3b/Exp3b_Hmin5000.txt};
\addplot table [x=x,y=y]{Plots/Exp3b/Exp3b_Kacndof5000.txt};
\addplot table [x=x,y=y]{Plots/Exp3b/Exp3b_Interpol.txt};
\legend{{$\fru_h^\textup{Min}$},{$\fru_h$},{$\fru$}};
\end{axis}
\end{tikzpicture}
\begin{tikzpicture}
\begin{axis}[
	cycle multi list={\nextlist MyColorsPlotU},
	legend cell align=left,
	legend style={legend columns=2,legend pos=south west,font=\fontsize{6}{4}\selectfont},
	xmode = log,
	ymode = log,
    width = .5\textwidth,
    height = 0.4\textwidth,
    xlabel = {$\textup{ndof}$},]
 
\addplot table [x=ndof,y=L2error]{Plots/Exp3b/Experiment3_Galerkinndof500000_theta_0.5_eps_1e-06.txt};
\addplot table [x=ndof,y=L2error]{Plots/Exp3b/Experiment3_LSFEMndof500000_theta_0.5_eps_1e-06.txt};
\addplot table [x=ndof,y=L2error]{Plots/Exp3b/Experiment3_Hminndof500000_theta_0.5_eps_1e-06.txt};
\addplot table [x=ndof,y=L2error]{Plots/Exp3b/Experiment3_HminKacndof500000_theta_0.5_w_10000_eps_1e-06.txt};
\legend{{$\fru_h^G$},{$\fru_h^\textup{LS}$},{$\fru_h^\textup{Min}$},{$\fru_h$},{$\fru$}};
\end{axis}
\end{tikzpicture}
\caption{The left-hand side shows approximations evaluated at $(x,1/2)$ with adaptive mesh refinements and fixed parameter $\varepsilon = 10^{-6}$ with $\dim U_h \approx 5000$ and the right-hand side shows the convergence history plot of the $L^2(\Omega)$ error for $\varepsilon = 10^{-6}$ with adapted diffusion parameter in \eqref{eq:adaptedDiffusion}.}\label{fig:ErikssonJohanson2}
\end{figure}
\section{Conclusion}
We have introduced a novel numerical scheme that solves minimal residual methods in $W^{-1,p'}(\Omega)$. Additionally, we suggested an iterative scheme that converges towards the discrete solution of the resulting non-linear minimization problem. The scheme converges even for large exponents like $p=100$. The resulting approximations are beneficial for solving challenging PDE's like convection-dominated diffusion problems compared to other schemes like the Galerkin FEM or minimal residual methods in Hilbert spaces. However, in these challenging situations the convergence of our scheme seems to require some suitable mesh design. This can be done adaptively with some suitable designed initial mesh. We thus suggest a scheme where we increase the diffusion parameter depending on the degrees of freedom. This allowed for the approximation of convection-dominated diffusion problems with tiny diffusion parameters like $\varepsilon = 10^{-6}$.

\printbibliography

\end{document}